\newcommand{\zz}{\ensuremath{\mathbb{Z}}}
\newcommand{\ff}{\ensuremath{\mathbb{F}}}
\newcommand{\del}{\partial}
\newcommand{\xar}[1]{\xrightarrow{#1}}
\newcommand{\la}{\langle}
\newcommand{\ra}{\rangle}
\newcommand{\cal}[1]{\mathcal{#1}}
\renewcommand{\hat}[1]{\widehat{#1}}
\newcommand{\red}{\text{red}}
\newcommand{\im}{\text{im~}}
\newcommand{\coker}{\ensuremath{\text{coker~}}}
\newcommand{\spinc}{\ensuremath{\text{spin}^\text{c}} }
\newtheorem{thm}{Theorem}[section]
\newtheorem{prop}[thm]{Proposition}
\newtheorem{lem}[thm]{Lemma}
\newtheorem{cor}[thm]{Corollary}
\newtheorem{que}[thm]{Question}
\theoremstyle{definition}
\newtheorem{ex}[thm]{Example}
\newtheorem*{ack}{Acknowledgements}
\newtheorem*{org}{Organization}
\newtheorem*{rem}{Remark}
\begin{document}

\title{Floer homology, twist coefficients, and capping off}
\author{Braeden Reinoso}
\begin{abstract}
For an open book decomposition $(S,\phi)$, the fractional Dehn twist coefficients are rational numbers measuring the amount that the monodromy $\phi$ twists the surface $S$ near each boundary component. In general, the twist coefficients do not behave nicely under the operation of capping off a boundary component. The goal of this paper is to use Heegaard Floer homology to constrain the behavior of the fractional Dehn twist coefficients after capping off. We also use our results about fractional Dehn twists to study the Floer homology of cyclic branched covers over fibered two-component links.
\end{abstract}

\maketitle

\section{Introduction and Statement of Results}\label{intro}

Let $(S,\phi)$ be an open book decomposition with $r\geq2$ boundary components $B_1,...,B_r$ for a closed, oriented 3-manifold $Y$ and denote by $\tau_{B_i}(S,\phi)$ the fractional Dehn twist coefficient along $B_i$, as defined in \cite{hkm1}. Roughly, $\tau_{B_i}(S,\phi)$ measures twisting near $B_i$ after applying $\phi$. In this paper, we will not define fractional Dehn twist coefficients directly, but instead reference a few key properties:

%********************************

\begin{thm}[\cite{kr},\cite{ik},\cite{hkm1}]\label{fdtc}\textcolor{white}{.}

\begin{enumerate}
\item $\tau_{B_i}(S,\text{id})=0$
\item $\tau_{B_i}(S,D_B^n\circ\phi^k)=n+k\tau_{B_i}(\phi)$
\item If the contact structure $\xi_{(S,\phi)}$ supported by $(S,\phi)$ is tight, then $\tau_{B_i}(S,\phi)\geq 0$
\item If $\tau_{B_i}(S,\phi)\geq0$ then for any arc $\alpha$ leaving $B_i$ we have $$\iota(\phi(\alpha),\alpha)\in\{\tau_{B_i}(S,\phi),\tau_{B_i}(S,\phi)-1,\lfloor\tau_{B_i}(S,\phi)\rfloor\}$$ where $\iota$ denotes signed intersection number in an annular neighborhood of $B_i$.
\item If $\tau_{B_i}(S,\phi)\leq0$ then for any arc $\alpha$ leaving $B_i$ we have $$\iota(\phi(\alpha),\alpha)\in\{\tau_{B_i}(S,\phi),\tau_{B_i}(S,\phi)+1,\lceil\tau_{B_i}(S,\phi)\rceil\}$$
\end{enumerate}
\end{thm}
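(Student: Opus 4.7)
The plan is to recall the definition of the fractional Dehn twist coefficient via translation numbers on the boundary and then dispatch each property in turn. First I would fix the boundary component $B_i$ and pick the Nielsen--Thurston representative $\psi$ in the isotopy class of $\phi$ rel $\partial S$; on a collar neighborhood of $B_i$, such a $\psi$ restricts to a rigid rotation of $B_i$. Lifting $\psi|_{B_i}$ to the universal cover $\widetilde{B_i}\cong\rr$ with the canonical lift determined by the Nielsen--Thurston structure, $\tau_{B_i}(S,\phi)$ is the Poincar\'e translation number $\lim_{n\to\infty}(\tilde\psi^n(x)-x)/n$.

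Properties (1) and (2) then follow from standard facts about translation numbers. The identity lifts to the identity, so $\tau_{B_i}(S,\mathrm{id})=0$. A boundary-parallel Dehn twist $D_B$ shifts the canonical lift by one, and the translation number is a homogeneous quasimorphism which restricts to a homomorphism on the abelian subgroup generated by the canonical lifts of $\psi$ and $D_B$; this immediately yields $\tau_{B_i}(S,D_B^n\circ\phi^k)=n+k\tau_{B_i}(S,\phi)$.

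Property (3) is the heart of the theorem and I would approach it via the right-veering framework of Honda--Kazez--Matic. The key input is their characterization that the contact structure $\xi_{(S,\phi)}$ is tight only if $\phi$ is right-veering, meaning that for every properly embedded arc $\alpha\subset S$ the image $\phi(\alpha)$ lies to the right of $\alpha$ at each endpoint. One then translates the condition $\tau_{B_i}(S,\phi)<0$ into the existence of an arc $\alpha$ based at $B_i$ which is sent strictly to the left, contradicting right-veering. This translation is the main obstacle: it requires the full structure theory of pseudo-Anosov maps on surfaces with boundary, together with Kazez--Roberts' sharpening that pins down the relationship between the fractional part of $\tau_{B_i}$ and the combinatorics of arcs emanating from $B_i$.

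Properties (4) and (5) are then essentially two-dimensional statements, local to a collar of $B_i$. Using the Nielsen--Thurston form of $\phi$ and lifting the collar to a half-infinite strip in the universal cover, the signed intersection number $\iota(\phi(\alpha),\alpha)$ is computed directly from how the endpoint of $\alpha$ is displaced by the canonical boundary lift. The three possible values in each case correspond to the three topologically distinct positions of the relevant lifted endpoints: exactly at the translation distance (giving $\tau_{B_i}(S,\phi)$), one unit off in the direction consistent with the sign of $\tau_{B_i}$, or the nearest integer value $\lfloor\tau_{B_i}\rfloor$ or $\lceil\tau_{B_i}\rceil$. Once the boundary picture is set up carefully, (4) and (5) follow from a direct count of signed crossings in the strip.
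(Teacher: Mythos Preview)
The paper does not prove this theorem; it is stated as a compilation of known properties with citations to \cite{kr}, \cite{ik}, and \cite{hkm1}, and the author explicitly writes that fractional Dehn twist coefficients will not be defined directly but only used via these listed properties. So there is no in-paper proof to compare your proposal against.

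That said, your outline does track the arguments in the cited literature reasonably well. Your treatment of (1) and (2) via translation numbers is the standard one, and your use of the fact that $D_{B_i}$ is central (so the quasimorphism is a genuine homomorphism on the subgroup it generates with $\phi$) is exactly right. For (3), you have identified the correct mechanism: Honda--Kazez--Mati\'c show that tightness forces every supporting open book to be right-veering, and right-veering at $B_i$ forces $\tau_{B_i}\geq 0$. Your sketch of (4) and (5) is where the real content of the cited references lies, and here your description is too impressionistic to count as a proof: the precise trichotomy among $\tau$, $\tau\pm 1$, and $\lfloor\tau\rfloor$ or $\lceil\tau\rceil$ depends on a case analysis according to the Nielsen--Thurston type of $\phi$ (periodic, reducible with a boundary-parallel reducing curve, or pseudo-Anosov with prongs at $B_i$), and in the pseudo-Anosov case on the combinatorics of the singular foliation near $B_i$. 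Simply lifting to a strip and counting crossings does not by itself explain why exactly these three values occur and no others; that requires the structure theory developed in \cite{ik} and \cite{kr}. If you want a self-contained argument, that case analysis is what you would need to fill in.
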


%********************************

Now, let $(S_0,\phi_0)$ denote the open book decomposition obtained by capping off $B_1$ with a disk and extending $\phi$ to the identity along the capping disk. This capping off procedure defines a cobordism $$W_0:-Y_0\to -Y$$ where $Y_0$ is obtained from $Y$ by Dehn surgery along $B_1$ with framing given by that of the pages of the open book decomposition, and $W_0$ is the trace of the surgery.

The cobordism $W_0$ often admits a natural symplectic structure (\cite{gs}, \cite{w}), and hence provides a natural operation between the contact structures $\xi_{(S,\phi)}$ and $\xi_{(S_0,\phi_0)}$. Nonetheless, the invariants $\{\tau_{B_i}(S,\phi)\}$ and $\{\tau_{B_i}(S_0,\phi_0)\}$ (which provide valuable information regarding $\xi_{(S,\phi)}$ and $\xi_{(S_0,\phi_0)}$, respectively) often differ greatly.

For example, Baldwin and Etnyre in \cite{be} provide examples of genus 1 open books $(S,\phi)$ with two boundary components, where the fractional Dehn twist coefficients of $(S,\phi)$ are arbitrarily positive, and the fractional Dehn twist coefficients of $(S_0,\phi_0)$ are arbitrarily negative. We revist these examples in Section \ref{app}.

One difficulty in constraining the change in fractional Dehn twist coefficients under capping off is that the dynamical structure of $\phi$ may change globally in unpredictable ways. This is related to the fact that the Nielsen--Thurston type of $\phi_0$ may differ from that of $\phi$, and ultimately reflects the complexity of the mapping class groups of surfaces.
%*******************************************************************************

\subsection{L-space case}
Unless explicitly stated otherwise, all Heegaard Floer groups will take coefficients in $\ff=\zz/2\zz$. The simplest form of our main result is as follows:

%********************************

\begin{thm}\label{l-space}
Let $(S,\phi)$ be an open book for an L-space $Y$. If either:
\begin{enumerate}
\item $\tau_{B_1}(S,\phi)<-1$, or
\item $\tau_{B_1}(S,\phi)<0$ and $\tau_{B_i}(S,\phi)<0$ for some $i\neq 1$,
\end{enumerate}
then $c_\red(S_0,\phi_0)=0$ in $\mathit{HF}^+_\red(-Y_0)$.
\end{thm}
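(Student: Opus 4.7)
The plan is to combine three ingredients: (i) the overtwistedness criterion coming from negative FDTC (Theorem \ref{fdtc}(3)), (ii) naturality of the contact invariant under the capping off cobordism $W_0$ (in the style of Baldwin), and (iii) a surgery exact triangle at the binding component $B_1$, together with the L-space hypothesis on $Y$.

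First I would observe that under either hypothesis, $\tau_{B_1}(S,\phi)<0$, so by the contrapositive of Theorem \ref{fdtc}(3) the contact structure $\xi_{(S,\phi)}$ is overtwisted, and hence $c^+(S,\phi)=0$ in $\mathit{HF}^+(-Y)$. Next, by naturality of the contact invariant under capping off, the induced map $F^+_{W_0}\colon \mathit{HF}^+(-Y_0)\to \mathit{HF}^+(-Y)$ (summed over \spinc structures on $W_0$) sends $c^+(S_0,\phi_0)$ to $c^+(S,\phi)=0$, so $c^+(S_0,\phi_0)\in \ker F^+_{W_0}$.

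To promote this to $c_\red(S_0,\phi_0)=0$ --- i.e., to show that $c^+(S_0,\phi_0)$ sits in the tower part of $\mathit{HF}^+(-Y_0)$ --- I would bring in the strengthened hypotheses via a second vanishing. Under condition (1), Theorem \ref{fdtc}(2) gives $\tau_{B_1}(S,D_{B_1}\phi)=\tau_{B_1}(S,\phi)+1<0$, so $(S,D_{B_1}\phi)$ also supports an overtwisted contact structure on an auxiliary 3-manifold $Y'$ differing from $Y$ by a page-framed Dehn surgery along $B_1$; in particular, $c^+(S,D_{B_1}\phi)=0$. The surgery exact triangle in $\mathit{HF}^+$ for consecutive framings on $B_1$ relates $\mathit{HF}^+(-Y_0)$, $\mathit{HF}^+(-Y)$, and $\mathit{HF}^+(-Y')$ by $U$-equivariant maps. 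Since $-Y$ is an L-space, $\mathit{HF}^+(-Y)$ is a sum of pure towers, and the vanishing of both $c^+(S,\phi)$ and $c^+(S,D_{B_1}\phi)$ at the other two vertices of the triangle should force $c^+(S_0,\phi_0)$ to sit in the image of the map from $\mathit{HF}^+(-Y)$, hence to be $U$-divisible. Under condition (2), I would instead use that capping off $B_i$ first yields an auxiliary open book whose contact invariant also vanishes (by overtwistedness from $\tau_{B_i}(S,\phi)<0$), and run an analogous argument with a second capping off cobordism playing the role of the second leg of the triangle.

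The main obstacle is making step (iii) precise: carefully tracking \spinc structures through the triangle and checking that the combination of two vanishing contact classes together with the L-space rigidity of $\mathit{HF}^+(-Y)$ really forces $U$-divisibility of $c^+(S_0,\phi_0)$, rather than merely killing it in the target. This is exactly where the hypothesis $\tau_{B_1}<-1$ (rather than just $<0$) becomes essential, since it is what unlocks the second vanishing; the analogous role is played in case (2) by the assumption that a second boundary component has negative twist coefficient.
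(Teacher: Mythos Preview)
Your architecture for case~(1) is essentially the contrapositive of the paper's proof. In the surgery triangle
\[
\cdots \to \mathit{HF}^+(-Y)\xrightarrow{\,f\,}\mathit{HF}^+(-Y_0)\xrightarrow{\,g\,}\mathit{HF}^+(-Y_{-1}(B_1))\to\cdots
\]
the paper identifies $g(c(S_0,\phi_0))=c(S,D_{B_1}\phi)$ via Proposition~\ref{natural}, then argues by contradiction: if $c_\red(S_0,\phi_0)\neq 0$ then $c(S_0,\phi_0)\notin\im f$ (since $Y$ is an L-space and $f$ is $U$-equivariant), hence $c(S,D_{B_1}\phi)\neq 0$, hence $(S,D_{B_1}\phi)$ is tight, hence all of its twist coefficients are nonnegative. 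You are running this backwards, concluding directly that $c(S,D_{B_1}\phi)=0$ by overtwistedness, so $c(S_0,\phi_0)\in\ker g=\im f$ is infinitely $U$-divisible. That is fine, but two things need fixing.

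First, your opening step---observing $c(S,\phi)=0$ and hence $c(S_0,\phi_0)\in\ker F_{W_0}$---is a red herring. The capping-off map $F_{W_0}:\mathit{HF}^+(-Y_0)\to\mathit{HF}^+(-Y)$ is \emph{not} a map in the triangle above; the arrow touching $\mathit{HF}^+(-Y)$ goes the other way. Only the vanishing of $c(S,D_{B_1}\phi)$ is used, and only one naturality statement is needed: that $g$ carries $c(S_0,\phi_0)$ to $c(S,D_{B_1}\phi)$ \emph{on the nose}, with no contributions from other \spinc structures. This is strictly stronger than Baldwin's result and is exactly Proposition~\ref{natural} applied to $(S,D_{B_1}\phi)$; its hypothesis $\tau_{B_1}(S,D_{B_1}\phi)<1$, i.e.\ $\tau_{B_1}(S,\phi)<0$, is where the FDTC assumption first enters.

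Second, your proposed detour for case~(2) via capping off $B_i$ is both unnecessary and not well-posed: $\tau_{B_i}(S,\phi)<0$ tells you $(S,\phi)$ is overtwisted, not that the $B_i$-cap is, and there is no exact triangle placing $-Y_0$, the L-space $-Y$, and a $B_i$-cap at the three vertices. In fact case~(2) is handled by \emph{the same} argument as case~(1): under hypothesis~(2) one has $\tau_{B_i}(S,D_{B_1}\phi)=\tau_{B_i}(S,\phi)<0$ for some $i\neq 1$, so $(S,D_{B_1}\phi)$ is again overtwisted and $c(S,D_{B_1}\phi)=0$. The paper phrases this uniformly in contrapositive form: tightness of $(S,D_{B_1}\phi)$ would force $\tau_{B_j}(S,D_{B_1}\phi)\geq 0$ for \emph{every} $j$, contradicting either~(1) or~(2).
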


%********************************

If $\del S$ has two components, note that $\del S_0=B$ is connected. In this case, we can use a result of Honda--Kazez--Mati\'c from \cite{hkm2} (which was later repoven by Conway in \cite{c}), that $c_\red(S_0,\phi_0)\neq 0$ when $\tau_B(S_0,\phi_0)>1$, to obtain a numerical bound on the change in fractional Dehn twist coefficients:

%********************************

\begin{cor}\label{l-space2}
Let $(S,\phi)$ be an open book for an L-space $Y$ with $r=2$ boundary components. If either:
\begin{enumerate}
\item $\tau_{B_1}(S,\phi)<-1$, or
\item $\tau_{B_i}(S,\phi)<0$ for both $i=1,2$
\end{enumerate}
then $\tau_B(S_0,\phi_0)\leq 1$.
\end{cor}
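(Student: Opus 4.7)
The plan is a short contrapositive argument combining Theorem \ref{l-space} with the non-vanishing criterion of Honda--Kazez--Mati\'c \cite{hkm2} (reproven by Conway \cite{c}) quoted in the excerpt. The hypothesis $r=2$ plays two roles: it ensures that after capping off $B_1$, the surface $S_0$ has a single boundary component $B$ to which the HKM/Conway theorem can be applied, and it allows the dichotomy $c_\red = 0$ versus $c_\red \neq 0$ to be translated into a numerical constraint on $\tau_B(S_0,\phi_0)$.

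We argue by contradiction. Suppose $(S,\phi)$ satisfies hypothesis (1) or (2), but $\tau_B(S_0,\phi_0) > 1$. Applying Theorem \ref{l-space} to $(S,\phi)$ gives $c_\red(S_0,\phi_0) = 0$ in $\mathit{HF}^+_\red(-Y_0)$. On the other hand, applying the HKM/Conway theorem to the open book $(S_0,\phi_0)$ of $Y_0$, whose binding is the single component $B$ with $\tau_B(S_0,\phi_0) > 1$, yields $c_\red(S_0,\phi_0) \neq 0$. The two statements are incompatible, so $\tau_B(S_0,\phi_0) \leq 1$.

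We do not anticipate any substantive obstacle here: the real content lies in Theorem \ref{l-space} and in the cited HKM/Conway non-vanishing result, and this corollary is essentially a packaging step. The only points to verify are that the hypotheses of the HKM/Conway theorem are met by $(S_0,\phi_0)$ once $\del S_0$ is connected, and that both references work with the same reduced contact invariant $c_\red$ on $\mathit{HF}^+_\red$ that appears in Theorem \ref{l-space}.
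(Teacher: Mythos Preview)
Your argument is correct and is exactly the one the paper intends: combine Theorem~\ref{l-space} with the Honda--Kazez--Mati\'c/Conway non-vanishing result for connected binding to derive a contradiction from $\tau_B(S_0,\phi_0)>1$. There is nothing to add; this is precisely the packaging step the paper outlines in the paragraph preceding Corollary~\ref{l-space2}.
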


%********************************

The bound in the above corollary can also be used fairly easily to obstruct $Y$ from being an L-space. For example, if the contact structure associated to $\phi^{-1}$ is tight, $\phi$ is psuedo-Anosov, and $\tau_B(S_0,\phi_0)>1$, then $Y$ is not an L-space. In Section \ref{app}, we use this idea to show that the ambient manifolds associated to many genus one, two boundary component open books (including the Baldwin--Etynre examples) are not L-spaces.

%*******************************************************************************

\subsection{Non-L-space case}

When $Y$ is not an L-space, Theorem \ref{l-space} can be generalized as follows:

%********************************

\begin{thm}\label{red}
If $Y_0$ is a rational homology sphere and either:
\begin{enumerate}
\item $\tau_{B_1}(S,\phi)<-\dim\mathit{HF}^+_\red(Y)-1$, or
\item $\tau_{B_1}(S,\phi)<-\dim\mathit{HF}^+_\red(Y)$ and $\tau_{B_i}(S,\phi)< 0$ for some $i\neq 1$,
\end{enumerate}
then $c_\red(S_0,\phi_0)=0\in \mathit{HF}^+_\red(-Y_0)$.
\end{thm}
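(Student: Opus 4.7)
The plan is to extend the proof strategy of Theorem~\ref{l-space} by iterating the main vanishing argument enough times to absorb the rank of $\mathit{HF}^+_\red(Y)$. Set $N := \dim \mathit{HF}^+_\red(Y)$ and consider the one-parameter family of monodromies $\phi_j := D_{B_1}^{-j} \circ \phi$ for $j \geq 0$. Each $(S, \phi_j)$ supports a closed 3-manifold $Y^{(j)}$ with $Y^{(0)} = Y$, but crucially they all share the \emph{same} capped-off open book $(S_0, \phi_0)$ for $Y_0$, since the boundary-parallel curve supporting $D_{B_1}$ bounds a disk once $B_1$ is filled in. The manifolds $\{Y^{(j)}\}$ are related by successive integer Dehn surgeries on the core of the capping disk in $Y_0$, and the corresponding capping-off cobordisms $W_{[j]} : -Y_0 \to -Y^{(j)}$ are the traces of these surgeries.

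By Baldwin's capping-off formula, each induced map $F^+_{W_{[j]}} : \mathit{HF}^+(-Y_0) \to \mathit{HF}^+(-Y^{(j)})$ sends $c^+(S_0, \phi_0)$ to $c^+(S, \phi_j)$. Property~(2) of Theorem~\ref{fdtc} gives $\tau_{B_1}(S, \phi_j) = \tau_{B_1}(S, \phi) - j$ while $\tau_{B_i}$ is unchanged for $i \neq 1$, so under either hypothesis of Theorem~\ref{red} one checks directly that for every $j \geq 0$ either $\tau_{B_1}(S, \phi_j) < 0$ or, in case~(2), some $\tau_{B_i}(S, \phi_j) < 0$ with $i \neq 1$. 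Property~(3) of Theorem~\ref{fdtc} makes each $\xi_{(S, \phi_j)}$ overtwisted, so $c^+(S, \phi_j) = 0$ and
\[
c^+(S_0, \phi_0) \in \bigcap_{j \geq 0} \ker F^+_{W_{[j]}}.
\]
To convert these kernel containments into $c_\red(S_0, \phi_0) = 0$, the natural tool is the Ozsv\'ath--Szab\'o integer surgery exact triangle relating $\mathit{HF}^+(-Y^{(j)})$, $\mathit{HF}^+(-Y^{(j+1)})$ and $\mathit{HF}^+(-Y_0)$, in which $F^+_{W_{[j]}}$ appears as one of the three maps. Each vanishing of $c^+(S, \phi_j)$ produces a lift of $c^+(S_0, \phi_0)$ across the triangle, and iterating this should eventually force $c^+(S_0, \phi_0)$ into the image of $\mathit{HF}^\infty(-Y_0) \to \mathit{HF}^+(-Y_0)$ -- equivalently, $c_\red(S_0, \phi_0) = 0$ -- as soon as we produce more independent lifts than $\dim \mathit{HF}^+_\red(Y)$ allows.

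The main obstacle is making this dimension-counting argument rigorous: one must show that the $N+1$ lifts produced above really do live in a subquotient whose total dimension is controlled by $\dim \mathit{HF}^+_\red(Y)$, rather than by the potentially larger reduced groups of the intermediate manifolds $Y^{(j)}$. Doing this cleanly will require carefully tracking the $U$-equivariance of the cobordism maps, the grading formulas for the $2$-handle attachments along the binding, and the behavior of the tower/reduced decomposition of $\mathit{HF}^+$ across the surgery triangles; the distinction between the threshold in hypothesis~(1) and that in hypothesis~(2) should fall out of how one spends a single unit of the bound on whether $B_1$ supplies all the negative twist or shares it with another boundary component.
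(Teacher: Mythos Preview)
Your proposal has a genuine gap, and it is exactly the one you flag in your final paragraph; unfortunately, it is not a technicality but the whole difficulty. Each of your surgery triangles involves a different manifold $Y^{(j)}$, so the lift of $c^+(S_0,\phi_0)$ produced at step $j$ lives in $\mathit{HF}^+(-Y^{(j+1)})$, not in $\mathit{HF}^+(-Y)$. There is no mechanism for comparing lifts across different $j$, and no reason the groups $\mathit{HF}^+_\red(Y^{(j)})$ should be bounded by $\dim\mathit{HF}^+_\red(Y)$; indeed they typically grow with $j$. So ``producing more independent lifts than $\dim\mathit{HF}^+_\red(Y)$ allows'' has no meaning as stated, because the lifts never sit in a common space whose size is controlled by $Y$.

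The paper resolves this by a genuinely different device: rather than iterating $n$ untwisted triangles, it uses a \emph{single} Hedden--Mark exact triangle with twisted $\ff[C_n]$-coefficients,
\[
\mathit{HF}^+(-Y)\xrightarrow{f}\underline{\mathit{HF}}^+(-Y_0;\ff[C_n])\xrightarrow{g}\mathit{HF}^+(-Y_{-1/n}(B_1)),
\]
in which the first term is still the untwisted group for $-Y$. The contact class now generates an $n$-dimensional submodule $\underline{c}_n\subset\underline{\mathit{HF}}^+(-Y_0;\ff[C_n])$ (this uses the rational homology sphere hypothesis; see Lemma~\ref{rank}), and a twisted analogue of the naturality statement (Proposition~\ref{natural2}) identifies $g(\underline{c}_n)$ with $c(S,D_{B_1}^n\circ\phi)$. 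Now the dimension count is honest: $f^{-1}(\underline{c}_n)$ sits in $\coker U^j\subset\mathit{HF}^+(-Y)$, which has dimension at most $\dim\mathit{HF}^+_\red(Y)=n-1<n=\dim\underline{c}_n$, so $\underline{c}_n\not\subset\im f$ and hence $g(\underline{c}_n)\neq 0$, contradicting overtwistedness. The twisted coefficients are precisely what let one trade ``many triangles over many manifolds'' for ``one triangle with an enlarged middle term but the same first term,'' and this is the idea your outline is missing.
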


%********************************

Note that in this general case, we require that $Y_0$ is a rational homology sphere, which is unnecessary when $Y$ is an L-space.

Because adding Dehn twists around $B_1$ to $\phi$ does not change $(S_0,\phi_0)$, and adding positive/negative Dehn twists corresponds to $\mp\frac{1}{n}$ surgery with respect to the page framing, we have the following immediate corollary:

%********************************

\begin{cor}
If $Y_0$ is a rational homology sphere and $c_\red(S_0,\phi_0)\neq 0$ then, for all $n$, $$\dim\mathit{HF}^+_\red(Y_{\frac{1}{n}}(B_1))\geq -\tau_{B_1}(S,\phi)+n-1$$
\end{cor}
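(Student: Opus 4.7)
The proof is a formal consequence of Theorem \ref{red} applied to a twisted version of $\phi$. The plan is as follows. First I would introduce the auxiliary monodromy $\phi' := D_{B_1}^{-n}\circ\phi$ and verify the three observations prepared by the preamble to the corollary: since $D_{B_1}^{-n}$ is supported in an annular neighborhood of $B_1$, it becomes isotopic to the identity once $B_1$ is capped off, so $(S_0,\phi_0')=(S_0,\phi_0)$ as abstract open books; consequently the capped manifold $Y_0$ remains a rational homology sphere and $c_\red(S_0,\phi_0')=c_\red(S_0,\phi_0)\neq 0$. By the surgery description recalled in the preamble, the open book $(S,\phi')$ presents $Y':=Y_{\frac{1}{n}}(B_1)$. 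Finally, by property (2) of Theorem \ref{fdtc}, $\tau_{B_1}(S,\phi')=-n+\tau_{B_1}(S,\phi)$.

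Next I would apply Theorem \ref{red} to the auxiliary open book $(S,\phi')$ in contrapositive form. The hypotheses of the theorem (rational homology sphere $Y_0$) are satisfied for $(S,\phi')$, but the conclusion $c_\red(S_0,\phi_0')=0$ does not hold, so condition (1) of Theorem \ref{red} must fail for $(S,\phi')$:
$$\tau_{B_1}(S,\phi') \;\geq\; -\dim\mathit{HF}^+_\red(Y')-1.$$
Substituting $\tau_{B_1}(S,\phi')=-n+\tau_{B_1}(S,\phi)$ and rearranging yields precisely $\dim\mathit{HF}^+_\red(Y_{\frac{1}{n}}(B_1)) \geq -\tau_{B_1}(S,\phi)+n-1$, which is the desired bound.

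I do not anticipate any genuine obstacle: the entire argument reduces to a formal manipulation of open book data (a twist and a cap) followed by a single invocation of Theorem \ref{red}. The only point requiring care is the sign convention, namely that precomposition with $D_{B_1}^{-n}$ corresponds to $+\tfrac{1}{n}$-surgery along $B_1$ relative to the page framing — this is exactly the convention fixed in the sentence immediately preceding the corollary. The argument is uniform in $n$: for $n$ sufficiently large and positive the bound is substantive, while for $n\leq 0$ it reduces to (or is weaker than) the contrapositive of Theorem \ref{red}(1) applied to $\phi$ itself, so no case distinction on the sign of $n$ is needed.
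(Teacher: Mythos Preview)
Your proposal is correct and is exactly the argument the paper has in mind: the paper states the corollary as ``immediate'' from the observation that adding $D_{B_1}^{\pm n}$ to $\phi$ leaves $(S_0,\phi_0)$ unchanged while realizing $\mp\tfrac{1}{n}$ page-framed surgery, and then applies the contrapositive of Theorem~\ref{red}(1) --- precisely your computation with $\phi'=D_{B_1}^{-n}\circ\phi$.
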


%********************************

As in the previous subsection, when $\del S$ has $r=2$ boundary components, it follows that $\del S_0=B$ is connected. Hence, we can get a numerical bound on $\tau_B(S_0,\phi_0)$ in this case, too:

%********************************

\begin{cor}\label{frac}
Let $(S,\phi)$ be an open book for $Y$ with $r=2$ boundary components $B_1, B_2$. If $Y_0$ is a rational homology sphere and either:
\begin{enumerate}
\item $\tau_{B_1}(S,\phi)< -\dim\mathit{HF}^+_\red(Y)-1$, or
\item $\tau_{B_1}(S,\phi)<-\dim\mathit{HF}^+_\red(Y)$ and $\tau_{B_2}(S,\phi)< 0$,
\end{enumerate}
then $\tau_B(S_0,\phi_0)\leq 1$.
\end{cor}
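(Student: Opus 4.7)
The plan is that Corollary \ref{frac} should follow essentially immediately from Theorem \ref{red} combined with the Honda--Kazez--Mati\'c/Conway nonvanishing result quoted in the paragraph before Corollary \ref{l-space2}, which asserts that for a connected-boundary open book $(S_0,\phi_0)$ with $\tau_B(S_0,\phi_0)>1$ one has $c_\red(S_0,\phi_0)\neq 0\in \mathit{HF}^+_\red(-Y_0)$.

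First I would note that when $r=2$, capping off $B_1$ leaves an open book $(S_0,\phi_0)$ whose binding $\partial S_0 = B$ has one connected component, so the HKM/Conway hypothesis applies to $(S_0,\phi_0)$. Next I would observe that the hypotheses of Corollary \ref{frac}(1) and (2) are literally the hypotheses of Theorem \ref{red}(1) and (2) (with the additional assumption that $Y_0$ is a rational homology sphere, which is carried through verbatim), specialized to the case $r=2$ where the ``some $i\neq 1$'' in Theorem \ref{red}(2) can only be $i=2$. Therefore, in either case, Theorem \ref{red} yields $c_\red(S_0,\phi_0) = 0$ in $\mathit{HF}^+_\red(-Y_0)$.

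Finally, I would take the contrapositive of the HKM/Conway result: since $c_\red(S_0,\phi_0) = 0$, we cannot have $\tau_B(S_0,\phi_0) > 1$, whence $\tau_B(S_0,\phi_0)\leq 1$, as claimed.

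There is essentially no obstacle here; the corollary is a one-line deduction from Theorem \ref{red} and the cited nonvanishing criterion. The only thing to verify carefully is that the reduced contact invariant $c_\red(S_0,\phi_0)$ is well-defined and that the sign/orientation conventions for $-Y_0$ match between Theorem \ref{red} and the HKM/Conway statement, but both are stated in the same conventions in the excerpt.
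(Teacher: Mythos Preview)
Your proposal is correct and follows essentially the same approach as the paper: the corollary is deduced by combining Theorem \ref{red} (which gives $c_\red(S_0,\phi_0)=0$ under the stated hypotheses) with the contrapositive of the Honda--Kazez--Mati\'c/Conway nonvanishing criterion for connected-boundary open books. The paper does not write out a separate proof for Corollary \ref{frac} but indicates this exact deduction in the paragraph preceding its statement (``As in the previous subsection\ldots'').
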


%********************************

Using the results above, we can easily obtain lower bounds on the reduced Floer homology of the ambient manifold of an abstract open book. In general, a precise calculation of the Heegaard Floer homology of an arbitrary manifold is hard to ascertain directly, and our results provide a simple bound on the complexity. See section \ref{app} for some examples.

%*******************************************************************************

\subsection{Floer homology of branched covers}\label{cyclicintro}
Given an open book $(S,\phi)$ for $Y$, we can consider the branched cyclic cover $\Sigma_n(Y,\del S)$ with open book decomposition $(S,\phi^n)$ for any $n\geq 1$. We then have the following corollary regarding the Floer homology of $\Sigma_n(Y,\del s)$, which we prove in Subsection \ref{cyclicpf}:

\begin{cor}\label{cyclic}
Let $(S,\phi)$ be an open book for $Y$ with $r=2$ boundary components $B_1,B_2$, such that $\tau_{B_1}(S,\phi)<0$ and $\tau_B(S_0,\phi_0)>0$. Then:
\begin{itemize}
\item $\Sigma_n(Y,\del S)$ is not an L-space for some $n$, and
\item if $n$ is sufficiently large and $\Sigma_n(Y_0,\del S_0)$ is a rational homology sphere, then $$\dim\mathit{HF}^+_\red(\Sigma_n(Y, \del S))\geq -n\tau_{B_1}(S,\phi)-1.$$
\end{itemize}
\end{cor}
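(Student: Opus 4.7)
The plan is to apply Corollary \ref{l-space2} and Corollary \ref{frac} directly to the iterated open book $(S,\phi^n)$ for the branched cover $\Sigma_n(Y,\del S)$, controlling fractional Dehn twist coefficients via property (2) of Theorem \ref{fdtc}. First I would record that $(S,\phi^n)$ is a two-component open book for $\Sigma_n(Y,\del S)$, and that since $\phi_0$ extends $\phi$ by the identity across the disk filling $B_1$, the same holds for their $n$-th powers; thus $(\phi^n)_0=\phi_0^n$, and capping off $B_1$ in $(S,\phi^n)$ produces $(S_0,\phi_0^n)$, an open book for $\Sigma_n(Y_0,\del S_0)$. Both earlier corollaries then apply verbatim to this iterated data.

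Next I would use property (2) of Theorem \ref{fdtc} to observe that $\tau_{B_i}(S,\phi^n)=n\tau_{B_i}(S,\phi)$ and $\tau_B(S_0,\phi_0^n)=n\tau_B(S_0,\phi_0)$. Under the standing hypotheses $\tau_{B_1}(S,\phi)<0$ and $\tau_B(S_0,\phi_0)>0$, we have $\tau_{B_1}(S,\phi^n)\to-\infty$ and $\tau_B(S_0,\phi_0^n)\to+\infty$ as $n\to\infty$. For the first bullet, I would argue by contradiction: if $\Sigma_n(Y,\del S)$ were an L-space for every $n$, then for $n$ large enough to satisfy simultaneously $n\tau_{B_1}(S,\phi)<-1$ and $n\tau_B(S_0,\phi_0)>1$, hypothesis (1) of Corollary \ref{l-space2} would force $\tau_B(S_0,\phi_0^n)\leq 1$, contradicting the second inequality.

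For the second bullet, assume $\Sigma_n(Y_0,\del S_0)$ is a rational homology sphere and take $n$ large enough that $\tau_B(S_0,\phi_0^n)=n\tau_B(S_0,\phi_0)>1$. Since the conclusion of Corollary \ref{frac} applied to $(S,\phi^n)$ now fails, its condition (1) must also fail, yielding
$$n\tau_{B_1}(S,\phi)=\tau_{B_1}(S,\phi^n)\geq-\dim\mathit{HF}^+_\red(\Sigma_n(Y,\del S))-1,$$
which rearranges to the claimed inequality. The only nonroutine input beyond Theorem \ref{fdtc} and the two corollaries is the standard identification of $(S,\phi^n)$ with the open book of the $n$-fold cyclic cover branched over the binding, together with the compatibility $(\phi^n)_0=\phi_0^n$ that lets capping-off commute with passing to the branched cover; neither step poses a genuine obstacle, and the proof is essentially a bookkeeping exercise exploiting the linear scaling of fractional Dehn twist coefficients under iteration.
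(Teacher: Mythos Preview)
Your proof is correct and matches the paper's argument essentially line for line: both use the scaling $\tau_{B_i}(S,\phi^n)=n\tau_{B_i}(S,\phi)$ from Theorem~\ref{fdtc}(2), the identification $(\phi^n)_0=\phi_0^n$, and then apply the contrapositives of Corollary~\ref{l-space2} and Corollary~\ref{frac} to $(S,\phi^n)$ for large $n$. The only cosmetic difference is that you phrase the first bullet as a contradiction while the paper states it directly.
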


%********************************

This corollary can be seen as a partial generalization of Corollary 5 of \cite{hm} to the case of a two-component link. Addditionally, Kalelkar and Roberts demonstrate in \cite{kalrob} the existence of taut foliations on $\Sigma_n(Y,\del S)$ for sufficiently large $n$. This is enough to obstruct $\Sigma_n(Y,\del S)$ from being an L-space for some $n$, but leaves open the question of the minimal such $n$. The work of the present paper provides an easily computable numerical approximation of $n$, and of $\dim\mathit{HF}^+_\red(\Sigma_n(Y,\del S))$.

%*******************************************************************************

\subsection{Ingredients for the proofs}\label{ingred}
The relationship between Floer homology and twist coefficients was previously studied by Hedden and Mark in \cite{hm}. They showed that if $S$ has \emph{connected} boundary $B$, then $\dim\mathit{HF}^+_\red(Y)$ is an upper bound for $|\tau_B(S,\phi)|$. Our results can be interpreted as a generalization of Hedden and Mark's work to the significantly more complicated case of open books with disconnected boundary.

When $\del S$ is connected, the surface $S_0$ is closed and hence $(S_0,\phi_0)$ is no longer an open book decomposition. Nonetheless, $(S_0,\phi_0)$ describes a fibered 3-manifold $Y_0$ given by the mapping torus of $\phi_0$, which is obtained by page-framed surgery along $\del S$.

Hedden and Mark's construction relies on a key fact about this fibration which was originally proved by Ozsv\'ath and Szab\'o in \cite{ozsz}: there is a class $$C\in \mathit{HF}^+(-Y_0)$$ such that the map $$F_{W_0}:\mathit{HF}^+(-Y_0)\to \mathit{HF}^+(-Y)$$ associated to the capping off cobordism satisfies $$F_{W_0}(C)=c(S,\phi).$$

The idea behind the strategy of Hedden and Mark is to use this fact, in conjunction with a surgery-exact triangle of Ozsv\'ath--Szab\'o (see \cite{ozsz3}), to study the non-vanishing of the contact class $c(S,\phi)$ after adding boundary Dehn twists. In place of the class $C$ for the fibered 3-manifold $Y_0$, we will use the contact class of the capped-off open book $(S_0,\phi_0)$, and a similar argument to that of Hedden and Mark.

The key ingredient to our argument in the L-space setting is the following naturality result, which we prove in Section \ref{naturalpf}:

%********************************

\begin{prop}\label{natural}
When $\tau_{B_1}(S,\phi)<1$, the map $$F_{W_0}:\mathit{HF}^+(-Y_0)\to\mathit{HF}^+(-Y)$$
satisfies $F_{W_0}(c(S_0,\phi_0))=c(S,\phi)$. In particular, there is a unique $\spinc$-structure $\frak{s}_0$ on $W_0$ so that $F_{W_0,\frak{s}_0}(c(S_0,\phi_0))=c(S,\phi)$, and $F_{W_0,\frak{s}}(c(S_0,\phi_0))=0$ for any $\frak{s}\neq\frak{s}_0$. 
\end{prop}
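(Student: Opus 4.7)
The plan is to realize both contact classes as specific intersection points in compatible Heegaard diagrams and to compute $F_{W_0}$ as a holomorphic triangle count, in the spirit of Baldwin's earlier work on capping off combined with the Honda--Kazez--Mati\'c (HKM) construction of the contact invariant. First I would choose an arc basis $\{a_1, \ldots, a_n\}$ for $S$ in which $a_1$ is the unique arc with both endpoints on $B_1$; then $\{a_2, \ldots, a_n\}$ forms an arc basis for the capped-off surface $S_0$. The associated HKM Heegaard surfaces $\Sigma = S \cup \overline{S}$ and $\Sigma_0 = S_0 \cup \overline{S_0}$ differ by compression along a curve $\gamma$ on $\Sigma$ corresponding to $B_1$, and the contact classes $c(S,\phi)$ and $c(S_0,\phi_0)$ are each represented by the canonical HKM intersection points in their respective diagrams.

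Next I would exploit the hypothesis $\tau_{B_1}(S,\phi)<1$ via properties (4)--(5) of Theorem \ref{fdtc}: the hypothesis allows $a_1$ to be chosen so that $\iota(\phi(a_1), a_1) \leq 0$ in an annular neighborhood of $B_1$, ensuring that the $\beta_1$ curve associated to $a_1$ can be isotoped into a standard form near $B_1$ with no ``extra'' intersections with $\alpha_1$ to disrupt the HKM intersection point. This is the key geometric input that keeps the local picture near $B_1$ under control and is what makes the subsequent triangle count tractable.

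I would then realize $W_0$ as a $2$-handle cobordism and compute $F_{W_0}$ using the triangle-counting map on an appropriate Heegaard triple that encodes the capping-off surgery along $\gamma$ (e.g., obtained from the HKM diagram for $(S,\phi)$ by an appropriate stabilization and by replacing $\beta_1$ with a meridian dual to $\gamma$). The proposition then reduces to exhibiting a unique distinguished small-triangle class connecting $c(S_0,\phi_0)$ to $c(S,\phi)$ and verifying that this class lies in a single $\spinc$ structure $\frak{s}_0$ on $W_0$; non-contribution from other $\spinc$ structures follows from a standard admissibility/energy argument localized in the capping-off region.

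The main obstacle is the rigorous triangle analysis: showing that the relevant triangle class exists, contributes with algebraic weight $1$, and lies in a unique $\spinc$ structure, while all competing triangle classes either cancel or sit in $\spinc$ structures that annihilate the output. The twist coefficient condition $\tau_{B_1}(S,\phi)<1$ is precisely what rules out extra triangle classes arising from $\phi$ over-twisting near $B_1$; without it, the monodromy can produce uncontrolled intersection behavior near $B_1$ that spoils both the identification of $c(S,\phi)$ with a canonical intersection point and the uniqueness of the contributing $\spinc$ structure.
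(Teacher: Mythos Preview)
Your overall strategy---realize both contact classes via HKM generators, compute $F_{W_0}$ by a holomorphic triangle count on a Heegaard triple, and use properties (4)--(5) of Theorem~\ref{fdtc} to rule out extra triangles---is exactly the paper's. The difference is in the arc-basis choice, and your choice makes the argument harder than it needs to be.

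You take $a_1$ with \emph{both} endpoints on $B_1$, so that $\{a_2,\dots,a_n\}$ descends to a basis for $S_0$; this forces the HKM surfaces $\Sigma$ and $\Sigma_0$ to differ by a compression, and then you have to pass through a stabilization to build a common Heegaard triple. The paper (following Baldwin) instead takes $a_1$ to be the unique arc with \emph{one} endpoint on $B_1$. Then the Heegaard surface $\Sigma=S\cup(-S)$ serves for \emph{both} $-Y$ and $-Y_0$: one simply replaces $\beta_1$ by $\gamma_1=B_1$, which meets $\alpha_1$ in a single point $y_1$, and $(\Sigma,\alpha,\beta,\gamma,w)$ is already the triple for $W_0$ with no stabilization needed. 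The contact generators are $x=\{x_1,\dots,x_G\}$ and $y=\{y_1,x_2,\dots,x_G\}$, differing only in the first coordinate.

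With that setup, Baldwin's lemma (Lemma~\ref{domain} here) localizes every contributing triangle domain to $\Delta_1'+\Delta_2+\cdots+\Delta_G$, and the whole proposition reduces to showing $\Delta_1'=\Delta_1$. This is Lemma~\ref{trilem}: the non-standard $\Delta_1'$ would require a \emph{negative} intersection of $\phi(b_1)$ with $a_1$ near $B_1$ on $-S\times\{0\}$, and $\tau_{B_1}(S,\phi)<1$ together with Theorem~\ref{fdtc}(4)--(5) forces $\iota(\phi(b_1),a_1)\geq 0$ there, so no such domain exists. Your use of the fdtc hypothesis is the same in spirit, but with a two-ended arc you would have to control the picture at both endpoints of $a_1$ simultaneously, and your ``meridian dual to $\gamma$'' and stabilization step are doing work that the one-ended choice makes unnecessary.
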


Naturality of the contact class under capping off was originally studied by Baldwin in \cite{b}, where he proved a weaker form of naturality \emph{in the \spinc-structure $\frak{s}_0$}, when $Y_0$ is a rational homology sphere. Note that our result applies to the full map, and does not require that $Y_0$ is a rational homology sphere.

Assuming Proposition \ref{natural}, we can immediately prove Theorem \ref{l-space}:

%********************************

\begin{proof}[Proof of Theorem \ref{l-space}]
Suppose that $Y$ is an L-space and $c_\red(S_0,\phi_0)\neq 0$. We then know that $c(S_0,\phi_0)\neq 0$, and is not in the image of $U^j$ for some fixed sufficiently large $j$. Now, consider the Ozsv\'ath--Szab\'o surgery-exact triangle: $$...\to\mathit{HF}^+(-Y)\xar{f}\mathit{HF}^+(-Y_0)\xar{g}\mathit{HF}^+(-Y_{-1}(B_1))\to...$$

Note that by assumption (in either Case (1) or (2) of Theorem \ref{l-space}), we have:
\begin{align*}
\tau_{B_1}(S,D_{B_1}\circ\phi)&=1+\tau_{B_1}(S,\phi)<1
\end{align*}
Hence, by Proposition \ref{natural}, we have $g(c(S_0,\phi_0))=c(S,D_{B_1}\circ\phi)$.

Because $Y$ is an L-space, every element in $\mathit{HF}^+(-Y)$ is in the image of $U^j$. Because the map $f$ is $U$-equivariant, it follows that $\im{f}\subset\mathit{HF}^+(-Y_0)$ is contained in the image of $U^j$, too. In particular, $c(S_0,\phi_0)$ is not in the image of $f$. By exactness, it follows that $c(S_0,\phi_0)$ is not in the kernel of $g$, so $c(S,D_{B_1}\circ \phi)=g(c(S_0,\phi_0))\neq 0$.

Consequently, the contact structure associated to $(S,D_{B_1}\circ\phi)$ is tight (see Theorem \ref{hkmthm} below), so $\tau_{B_i}(S,D_{B_1}\circ\phi)\geq0$ for all $i$, by Theorem \ref{fdtc}. Therefore, $\tau_{B_1}(S,\phi)\geq-1$ and $\tau_{B_i}(S,\phi)\geq0$ for all $i\neq 1$, which contradicts the assumptions of the theorem.
\end{proof}

To prove Theorem \ref{red}, we need some more machinery. Nonetheless, the proof is similar in spirit to that of Theorem \ref{l-space}. The major difference is that when $Y$ is not an L-space, $\im{f}\subset\mathit{HF}^+(-Y_0)$ is no longer completely contained in the image of $U^j$. To rectify this, we would like to use $\dim\mathit{HF}^+(-Y)$ to bound the part of $\im{f}$ not contained in $U^j$, but this idea will require a different surgery-exact triangle and a modification to the contact class $c(S_0,\phi_0)$. See Section \ref{redpf} for more details.

%********************************

\begin{comment}
\begin{org}
In Section \ref{naturalpf} we prove Proposition \ref{natural}, which also concludes the proof of Theorem \ref{l-space}. We prove Theorem \ref{red} in Section \ref{redpf}, and provide some sample applications and computations in section \ref{app}, including the proof of Corollary \ref{cyclic}, and a computation of the Floer homology for the Baldwin--Etynre examples. We conclude with some related questions and further directions in Section \ref{further}.
\end{org}
\end{comment}

%********************************

\begin{ack}
I am greatly appreciative of my advisor John Baldwin for suggesting this topic, and for his patience, support and guidance in the preparation of this work. His mentorship over the past two years has been invaluable.
\end{ack}

%*******************************************************************************
%*******************************************************************************
%*******************************************************************************
%*******************************************************************************
%*******************************************************************************

\section{The proof of Proposition \ref{natural}}\label{naturalpf}

\begin{figure}[t]\captionsetup{width=.9\linewidth}\centering
\includegraphics[scale=1.5]{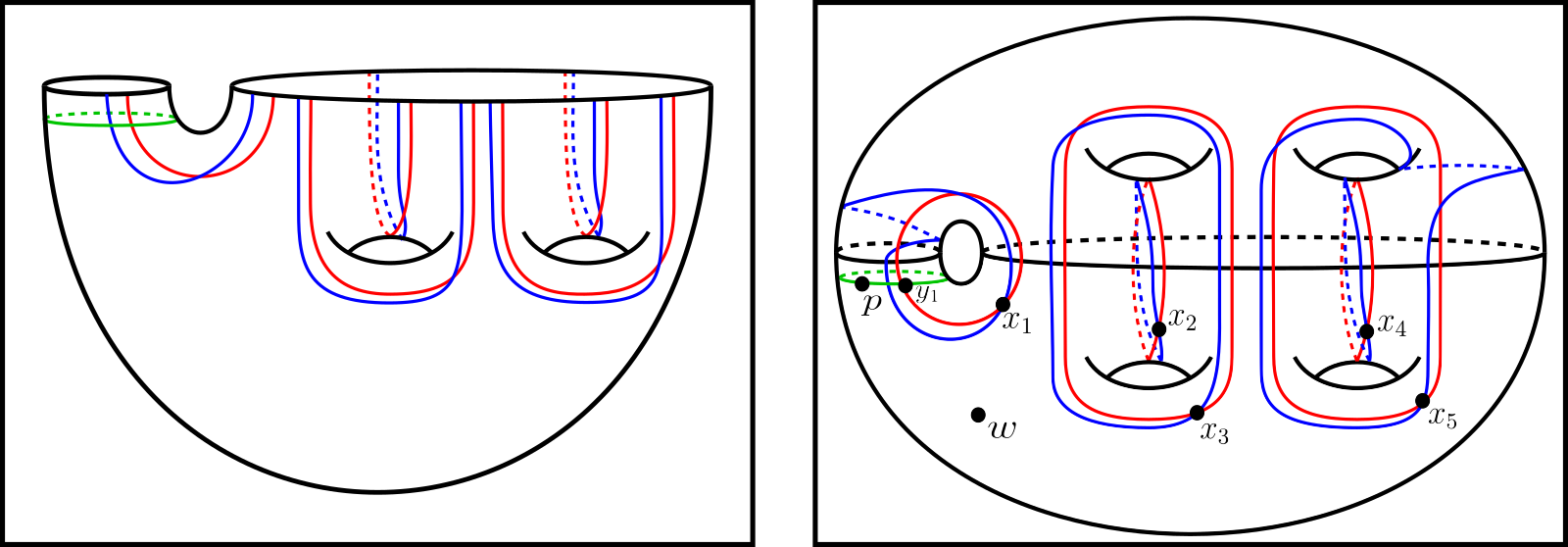}
\caption{\textbf{Left:} A basis of arcs $\{a_i\}$ in red and pushoffs $\{b_i\}$ in blue.\\ \textbf{Right:} Heegaard diagram corresponding to an open book $(S,\phi)$ with the intersection point $x$ shown. A diagram for $(S_0,\phi_0)$ replaces the left-most $\beta$ curve with a pushoff of $B_1$, shown in green.}
\label{standardHD}
\end{figure}

%********************************

In the presence of an open book decomposition $(S,\phi)$, there is a natural Heegaard splitting for $Y$ given by:
\begin{align*}
Y&=H_1\cup_\Sigma H_2\\
H_1&=S\times[0,\tfrac{1}{2}]\\
H_2&=S\times[\tfrac{1}{2},1]\\
\Sigma&=\left(S\times\{\tfrac{1}{2}\}\right)\cup_{\del S}\left(-S\times\{0\}\right).
\end{align*}
Note that if $S$ has genus $g$ and $r$ boundary components, then $\Sigma$ has genus $G=2g+r-1$, and we may see the attaching curves explicitly on $\Sigma$ in terms of the monodromy $\phi$ as follows:

Let $a_1,...,a_G$ be a basis of arcs for $S$ (i.e. $a_i\cap a_j=\emptyset$ for $i\neq j$ and $S\setminus\bigcup_ia_i$ is a disk). For all $i$, let $b_i$ be obtained from $a_i$ by an isotopy in the direction specified by the orientation of $\del S$, so that $a_i$ and $b_i$ intersect transversely and positively exactly once. We may then define the attaching curves $\alpha_1,...,\alpha_G$ and $\beta_1,...,\beta_G$ on $\Sigma$ by (see Figure \ref{standardHD} for reference):
\begin{align*}
\alpha_i&=a_i\times\left\{\tfrac{1}{2}\right\}\cup a_i\times\left\{0\right\}\\
\beta_i&=b_i\times\left\{\tfrac{1}{2}\right\}\cup\phi(b_i)\times\left\{0\right\}
\end{align*}

In order to define the Floer homology groups, we must additionally specify a basepoint $w$ on the Heegaard surface. To do this, note that the complement of the $a_i$ and $b_i$ curves in $S\times\{\frac{1}{2}\}$ consists of two small triangles with boundary on $a_i$, $b_i$ and $\del S$ for each $i$ as well as a large ``outer" region with boundary intersecting all the $a_i$ and $b_i$. We place the basepoint $w$ in this large outer region.

Let $x_i$ be the unique intersection point on $S\times\{\frac{1}{2}\}$ between $a_i$ and $b_i$, and note that the pointed Heegaard diagram $(\Sigma,\beta,\alpha,w)$ describes the manifold $-Y$. The collection $x=\{x_1,...,x_G\}$ thus defines an element of the Heegaard Floer chain group $CF^+(\Sigma,\beta,\alpha,w)$ whose homology is the group $\mathit{HF}^+(-Y)$.

%********************************

\begin{thm}[\cite{hkm},\cite{ozsz}]\label{hkmthm}
The generator $x\in CF^+(\Sigma,\beta,\alpha,w;\ff)$ is a cycle whose homology class $[x]\in \mathit{HF}^+(-Y)$ is an invariant of the contact structure $\xi_{(S,\phi)}$, called the ``\emph{contact class}" and denoted by $c(S,\phi)=c(\xi_{(S,\phi)})$. And, when $\xi_{(S,\phi)}$ is overtwisted, $c(S,\phi)=0$.
\end{thm}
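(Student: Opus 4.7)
The plan is to verify three claims in sequence: that $x$ is a cycle, that its homology class depends only on the contact structure, and that the class vanishes when $\xi_{(S,\phi)}$ is overtwisted.

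For the cycle property, I would argue that no nonconstant holomorphic disk contributes to $\partial x$. The key is a local analysis at each intersection $x_i = a_i\cap b_i$ on the top page $S\times\{\tfrac{1}{2}\}$: of the four quadrants meeting at $x_i$, two lie inside the small triangular regions bounded by $a_i$, $b_i$, and $\partial S$, while the other two lie in the outer region containing $w$. For any class $\psi \in \pi_2(x,y)$ with $n_w(\psi) = 0$, the outer region has multiplicity zero, and positivity of domains together with the input-corner constraints force the domain of $\psi$ to be supported entirely in these small triangles. Since each such triangle has $x_i$ as its only $\alpha$-$\beta$ corner, no nontrivial Whitney disk can be assembled. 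Hence $\partial x = 0$, and $x$ represents a well-defined class $[x]\in\mathit{HF}^+(-Y)$.

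To show $[x]$ depends only on the contact structure $\xi_{(S,\phi)}$, I would verify invariance under the two remaining moves of the Giroux correspondence: changing the basis of arcs, and positive stabilization of the open book. For arc slides, the two diagrams are related by a Heegaard handleslide, and I would track $x$ through the associated holomorphic triangle map, identifying a single count-one small triangle connecting $x$ with the analogous distinguished cycle for the new basis. For positive stabilization of $(S,\phi)$, the monodromy picks up an extra boundary-parallel positive Dehn twist on a Hopf-band summand, and the corresponding Heegaard move is a standard stabilization; the new distinguished cycle is the product of the old $x$ with the unique generator arising from the stabilizing handle. Combined with the Giroux correspondence, this produces a well-defined assignment $\xi\mapsto c(\xi)\in\mathit{HF}^+(-Y)$.

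For the overtwisted vanishing, I would use the right-veering criterion of Honda--Kazez--Mati\'c from \cite{hkm1}: an overtwisted contact structure can be supported by an open book whose monodromy is not right-veering, i.e.\ there is a basis arc $a_j$ such that $\phi(a_j)$ lies to the left of $a_j$ at some endpoint on $\partial S$. With such a basis, one can exhibit a bigon domain with $n_w=0$ bounded by segments of $a_j$ and $\phi(a_j)$ near the offending endpoint, giving a holomorphic disk in $\pi_2(y,x)$ for a generator $y$ differing from $x$ only in the $j$-th coordinate, and thereby realizing $x$ as a boundary in $CF^+(\Sigma,\beta,\alpha,w)$. Invariance of $[x]$ under the choice of supporting open book, established in the previous step, then completes the argument. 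I expect the most delicate step will be positive stabilization invariance, since it requires an explicit identification of the Heegaard diagram of the stabilized open book with a Heegaard stabilization of the original diagram and a careful check that the distinguished generators correspond; the cycle property is purely local, and the overtwisted vanishing reduces to producing one explicit bigon once a non-right-veering basis is fixed.
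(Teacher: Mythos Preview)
The paper does not prove this theorem: it is stated with attribution to \cite{hkm} and \cite{ozsz} and then used as a black box throughout Sections~\ref{naturalpf}--\ref{redpf}. There is therefore no proof in the paper to compare your proposal against.

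Your outline is nonetheless a reasonable sketch of the arguments in the cited references. Two remarks. First, in the overtwisted step, producing a single bigon in $\pi_2(y,x)$ does not by itself show that $x$ is a boundary; you must also check that this bigon is the \emph{only} Maslov-index-one, $n_w=0$ disk leaving $y$, so that $\partial y = x$ on the nose. This uniqueness follows from the same corner analysis you used for the cycle property, once the non-right-veering arc is taken as a basis arc, but it needs to be said. Second, the vanishing for overtwisted structures in \cite{ozsz} was originally obtained via connected-sum and naturality properties of the contact invariant rather than via right-veering; the argument you sketch is the later Honda--Kazez--Mati\'c approach.
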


%********************************

From the Heegaard diagram for $(S,\phi)$, we can also describe a pointed Heegaard diagram $(\Sigma,\gamma,\alpha,w)$ for $Y_0$ as follows. Choose $\Sigma=(S\times\{\frac{1}{2}\})\cup_{\del S}(-S\times\{0\})$ as above. For the attaching curves, choose $\alpha_i$ for all $i$ and $\gamma_j=\beta_j$ for $j>1$ exactly as before, and let $\gamma_1$ be a copy of the boundary component $B_1\subset\Sigma$. See Figure \ref{standardHD} for reference.

Then, Theorem \ref{hkmthm} implies that the collection $y=\{y_1,x_2,...,x_G\}\in CF^+(\Sigma,\gamma,\alpha, w)$ is a generator for $c(S_0,\phi_0)$ in $\mathit{HF}^+(-Y_0)$, where $y_1$ is the unique intersection point between $\gamma_1$ and $\alpha_1$. Additionally, $(\Sigma,\alpha,\beta,\gamma,w)$ is a pointed Heegaard triple diagram representing the cobordism $W_0$ (after perturbing $\gamma_i$ away from $\beta_i$ by a small isotopy for $i\geq 2$, as on the right of Figure \ref{standard}).

%********************************

\begin{figure}[t]\captionsetup{width=.9\linewidth}\centering
\includegraphics[scale=1.8]{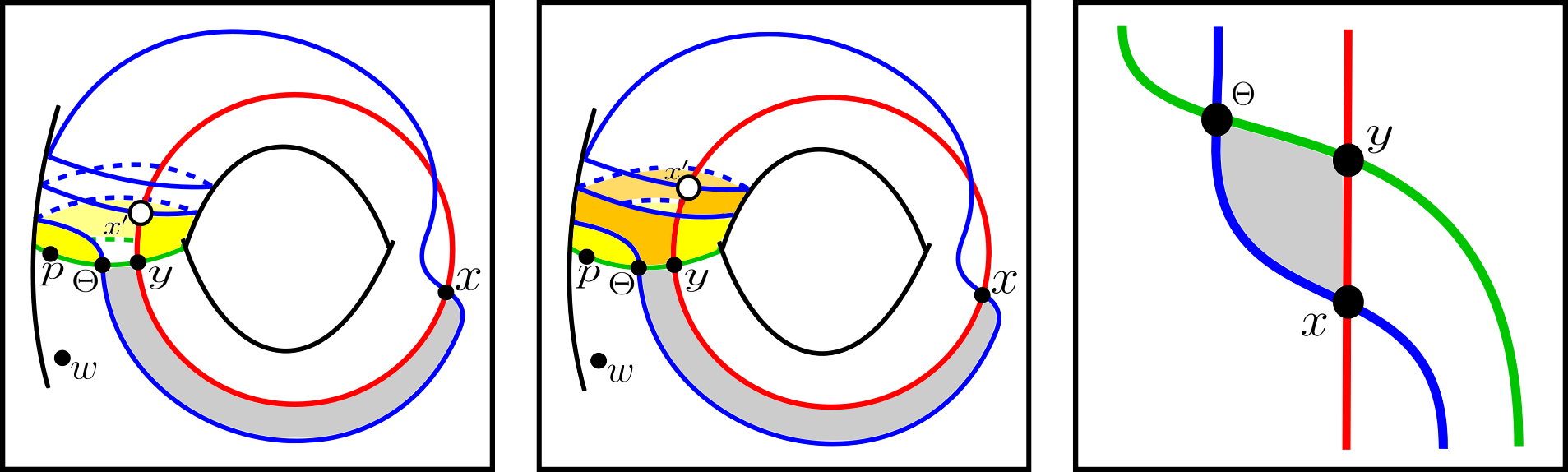}
\caption{\textbf{Left:} Standard (grey) and non-standard (yellow) triangles near $y_1$.\\ \textbf{Middle:} Another non-standard triangle (2yellow+orange) near $y_1$.\\ \textbf{Right:} Standard triangles near $y_i$ for $i\geq 2$.}
\label{standard}
\end{figure}

%********************************

\begin{lem}[\cite{b}]
The Heegaard triple diagram $(\Sigma,\alpha,\beta,\gamma,w)$ is admissible, and therefore yields a well-defined map $$F_{W_0}:\mathit{HF}^+(-Y_0)\to\mathit{HF}^+(-Y)$$
\end{lem}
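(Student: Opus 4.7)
The plan is to verify \emph{weak admissibility} of the triple diagram $(\Sigma,\alpha,\beta,\gamma,w)$, i.e., that every non-trivial triply-periodic domain $P$ with $n_w(P)=0$ has both positive and negative multiplicities. Weak admissibility is exactly what is needed to guarantee that, in each $\spinc$-structure, the count of holomorphic triangles with a given boundary data is finite, so that the triangle-counting map $F_{W_0}$ on $\mathit{HF}^+$ is well-defined.

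First, I would record that the two ``small'' pairwise subdiagrams are already admissible: $(\Sigma,\beta,\alpha,w)$ is the standard Honda--Kazez--Mati\'c Heegaard diagram for $-Y$ coming from the open book $(S,\phi)$, and $(\Sigma,\gamma,\alpha,w)$ is the analogous HKM-style diagram for $-Y_0$ coming from $(S_0,\phi_0)$. Admissibility in each case is the standard region analysis: the complement of the attaching curves in $S\times\{\tfrac12\}$ consists of two small ``$x$-triangles'' at each intersection point together with the large outer region containing $w$, so any non-trivial doubly-periodic domain with $n_w=0$ must carry multiplicities of both signs.

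Second, I would analyze the $(\beta,\gamma)$ subdiagram separately. For $i\geq 2$, the curve $\gamma_i$ is a small isotopic pushoff of $\beta_i$, so the two curves cobound a pair of thin bigons on opposite sides of their two intersection points; since $w$ sits in the outer region, the corresponding doubly-periodic generator carries multiplicities $+1$ and $-1$ on these bigons, hence has both signs. The only genuinely new periodic domain comes from the pair $(\beta_1,\gamma_1)$, where $\gamma_1$ is a pushoff of $B_1$ while $\beta_1=b_1\cup\phi(b_1)$; the complementary regions of $\beta_1\cup\gamma_1$ can be read off directly from Figure \ref{standardHD}, and the position of $w$ in the outer region again forces both signs once $n_w=0$ is imposed.

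Finally, I would invoke the general fact that the space of triply-periodic domains is generated (modulo a single global relation coming from $H_2$ of the associated cobordism) by doubly-periodic domains pulled from the three pairwise subdiagrams. Any triply-periodic $P$ with $n_w(P)=0$ therefore decomposes into such pieces, each of which has mixed signs by the previous two paragraphs. The main obstacle will be this last step: one must ensure that cancellations between the pairwise pieces cannot conspire to produce a non-trivial triply-periodic domain whose total multiplicities are all of one sign. This is handled by carefully tracking which regions of $\Sigma$ lie near $B_1$ (and hence can only be hit by the $\beta_1\gamma_1$-piece) versus which lie in the outer region containing $w$, exactly as in Baldwin's argument in \cite{b}.
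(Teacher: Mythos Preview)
The paper does not give its own proof of this lemma; it is simply quoted from \cite{b} and used as a black box. So there is nothing in the present paper to compare your argument against.

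That said, your outline is the standard one and matches in spirit what Baldwin does in \cite{b}: verify admissibility of each pairwise subdiagram, then argue that triply-periodic domains, being built from doubly-periodic ones, inherit mixed signs. Your treatment of the $(\beta,\alpha)$ and $(\gamma,\alpha)$ subdiagrams via the HKM region analysis is correct, and your description of the $(\beta_i,\gamma_i)$ bigon pairs for $i\geq 2$ is fine.

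The genuine gap is the one you flag yourself. Knowing that each doubly-periodic generator has both signs does \emph{not} imply that every $\zz$-linear combination with $n_w=0$ does; cancellation can and does occur in general. Your last paragraph simply asserts that ``carefully tracking which regions lie near $B_1$'' resolves this and then points back to \cite{b}. That is not a proof---it is a citation. If your goal is a self-contained argument, you must actually carry out that region analysis: identify regions that are touched by only one of the pairwise periodic domains (for instance, the small triangles at the $x_i$ on $S\times\{\tfrac12\}$, or the thin bigons between $\beta_i$ and $\gamma_i$ for $i\geq2$) and use those to pin down the coefficients in any putative one-signed combination. As written, your proposal is a correct plan but stops exactly where the work begins.
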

The map $F_{W_0}$ is defined by counting Whitney triangles $\Delta$ with vertices on intersection points $u\in\mathbb{T}_\alpha\cap\mathbb{T}_\beta$, $v\in\mathbb{T}_\alpha\cap\mathbb{T}_\gamma$, and $\Theta\in\mathbb{T}_\gamma\cap\mathbb{T}_\beta$, where $[\Theta]\in \mathit{HF}^{\leq 0}(\#^{G-1}(S^1\times S^2))$ is a top dimensional generator. For $i=1,...,G$, let $\Delta_i$ denote the \emph{standard} trianglular domain connecting $\Theta_i,x_i$ and $y_i$, shown in gray in Figure \ref{standard}.

%********************************

\begin{lem}[\cite{b}]\label{domain}
Let $\psi$ be a homotopy class of Whitney triangles connecting $\Theta$, $y$ and some intersection point $z=\{z_1,...,z_G\}\in\mathbb{T}_\beta\cap\mathbb{T}_\alpha$. If $\psi$ has a holomorphic representative with $n_w(\psi)=0$, then $z_i=x_i$ for $i\geq 2$, and $$D(\psi)=\Delta_1'+\Delta_2+...+\Delta_G$$ where $\Delta_1'$ is a triangular domain in $\Sigma\backslash\{w\}$ with vertices at $\theta_1, y_1$, and $z_1$.
\end{lem}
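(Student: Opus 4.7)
The plan is to combine positivity of intersections (available because $\psi$ carries a holomorphic representative) with the basepoint constraint $n_w(\psi)=0$ and the very rigid local structure of the Heegaard triple $(\Sigma,\alpha,\beta,\gamma)$ coming from the open book, in order to pin down $D(\psi)$ up to the single non-standard triangle near $B_1$.

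First I would note that positivity forces every local multiplicity of $D(\psi)$ to be a non-negative integer, and that $n_w(\psi)=0$ forces the multiplicity in the distinguished ``outer'' region of $\Sigma$ (the unique large complementary region, which contains $w$) to vanish. Combined, these say that $D(\psi)$ is supported in the union of small regions near $\partial S \subset \Sigma$ that are carved out by the $\alpha$-, $\beta$-, and $\gamma$-curves; in particular $D(\psi)$ is concentrated near the corners $\theta_i$, $y_i$, and any $z_i$.

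Next, for each $i\geq 2$ I would analyze the local picture in a disk neighborhood of $y_i=\alpha_i\cap\gamma_i$ shown on the right of Figure \ref{standard}. Since $\gamma_i$ is a small Hamiltonian pushoff of $\beta_i$, the three curves $\alpha_i,\beta_i,\gamma_i$ subdivide this neighborhood into a controlled collection of small regions, each of which meets the outer region along at least one arc. By enumerating the finitely many homotopy classes of triangular domains with corners at $\theta_i$, $y_i$, and a nearby point of $\alpha_i\cap\beta_i$, one checks that the standard $\Delta_i$ is the unique candidate whose non-negative extension does not leak into the outer region. This forces $z_i=x_i$ and forces the local contribution of $D(\psi)$ near $y_i$ to be exactly $\Delta_i$.

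Finally, subtracting $\Delta_2+\cdots+\Delta_G$ from $D(\psi)$ leaves a domain $\Delta_1'$ whose only remaining corners are $\theta_1$, $y_1$, and $z_1$, and which still has multiplicity zero at $w$. Hence $\Delta_1' \subset \Sigma\setminus\{w\}$ is a triangular domain as claimed. The main obstacle I anticipate is the case analysis in the previous paragraph: one must rule out ``larger'' domains that snake along $\alpha_i$ or $\beta_i$ before returning, and verify that no such domain can have non-negative local multiplicities everywhere while keeping $n_w=0$. The rigidity of the open-book Heegaard diagram---in particular the fact that the outer region is the unique large complementary region and borders every $\alpha_i$ and $\beta_i$---is what makes this enumeration tractable and should close out the argument.
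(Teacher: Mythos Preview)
The paper does not supply its own proof of this lemma; it is cited directly from Baldwin's paper \cite{b} and stated without argument. Your sketch is the standard route used in that reference (and in the analogous arguments of Honda--Kazez--Mati\'c and Ozsv\'ath--Szab\'o): positivity of the domain of a holomorphic triangle together with $n_w(\psi)=0$ forces the multiplicity in the large outer region of $S\times\{\tfrac12\}$ to vanish, and then the local picture near each $y_i$ for $i\ge 2$---where $\gamma_i$ is a small Hamiltonian pushoff of $\beta_i$---admits only the standard small triangle $\Delta_i$ once non-negativity and the corner conditions are imposed. Your acknowledged ``main obstacle'' (ruling out larger domains that wander along $\alpha_i$ or $\beta_i$) is resolved exactly as you anticipate: every small elementary region in the neighborhood of $\theta_i,y_i,x_i$ on the $S\times\{\tfrac12\}$ side shares an edge with the outer region, so the vanishing there propagates inward and leaves $\Delta_i$ as the only possibility. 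Subtracting these off is then purely formal, so your outline is correct and complete in spirit.
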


The key point of the proof of Proposition \ref{natural} is that the number of possible triangular domains $\Delta_1'$ appearing in the Heegaard triple is given by the integer part of the fractional Dehn twist coefficient $\tau_{B_1}(S,\phi)$. In particular, they are all of the form show in yellow or 2yellow+orange, on the left and middle in Figure \ref{standard}, respectively.

%********************************

\begin{lem}\label{trilem}
Let $\tau_{B_1}(S,\phi)<1$, and suppose $\psi$ is a homotopy class of Whitney triangles with a holomorphic representative and $n_w(\psi)=0$, connecting $\Theta$, $y$, and some intersection point $z\in\mathbb{T}_\beta\cap\mathbb{T}_\alpha$. Then, $z=x$ and $$D(\psi)=\Delta_1+...+\Delta_G$$
\end{lem}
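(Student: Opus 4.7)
My plan is to start from the conclusion of Lemma \ref{domain}, which already reduces the problem to analyzing the single triangular domain $\Delta_1'$ with vertices at $\theta_1\in\gamma_1\cap\beta_1$, $y_1\in\gamma_1\cap\alpha_1$, and $z_1\in\alpha_1\cap\beta_1$, lying in $\Sigma\setminus\{w\}$ with non-negative multiplicities. It then suffices to show that when $\tau_{B_1}(S,\phi)<1$, the only such $\Delta_1'$ is the standard triangle $\Delta_1$ shown in grey in Figure \ref{standard}, and in particular $z_1=x_1$.

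First I would unpack the local picture near $B_1\subset\Sigma$. The curve $\gamma_1$ is a small pushoff of $B_1$; the curve $\alpha_1$ contains $a_1\times\{\tfrac{1}{2}\}$ (ending on $B_1$ in the $S\times\{\tfrac{1}{2}\}$ half) together with $a_1\times\{0\}$ in the $-S\times\{0\}$ half; and $\beta_1$ consists of $b_1\times\{\tfrac{1}{2}\}$ together with $\phi(b_1)\times\{0\}$. Intersections of $\alpha_1$ with $\beta_1$ in a collar of $B_1$ thus come in two flavors: the intersection $x_1=a_1\cap b_1$ coming from the transverse intersection in the $S\times\{\tfrac{1}{2}\}$ half, and additional intersections in the $-S\times\{0\}$ half coming from $a_1\cap\phi(b_1)$. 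By items (4) and (5) of Theorem \ref{fdtc}, the signed count $\iota(\phi(b_1),b_1)$ measured in an annular neighborhood of $B_1$ is bounded above by $\lceil\tau_{B_1}(S,\phi)\rceil$, which under our hypothesis is at most $0$.

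Next I would enumerate all candidate triangular domains $\Delta_1'$ avoiding $w$, following the classification suggested by the yellow and 2yellow+orange regions of Figure \ref{standard}. The observation I want to make precise is that any non-standard domain must traverse $\gamma_1$ across additional intersection points with $\beta_1$ lying in the $-S\times\{0\}$ half, and each such traversal corresponds to a positive wrap of $\phi(b_1)$ around $B_1$; these wraps contribute positively to $\iota(\phi(b_1),b_1)$. Combined with the bound $\iota(\phi(b_1),b_1)\leq 0$ derived above, this shows no non-standard domain can have non-negative multiplicities and vanishing $n_w$, so the only possibility is $\Delta_1'=\Delta_1$, which forces $z_1=x_1$ and completes the proof.

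The main obstacle I anticipate is pinning down the precise bijection between non-standard triangular domain types near $B_1$ and the ``excess'' positive wrappings of $\phi(b_1)$ around $B_1$ beyond $x_1$. In particular, I must rule out wraparound domains of higher multiplicity that might a priori exist without introducing new $\alpha\cap\beta$ intersection points, and this will require a careful use of the positivity of multiplicities together with the explicit cell decomposition of $\Sigma\setminus(\alpha\cup\beta\cup\gamma)$ in the collar of $B_1$. Once this local combinatorial classification is in hand, the passage from ``$\iota(\phi(b_1),b_1)\leq 0$'' to ``only the standard triangle survives'' is immediate, and the global statement of the lemma follows at once from Lemma \ref{domain}.
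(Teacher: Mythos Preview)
Your outline follows the paper's approach: reduce via Lemma~\ref{domain} to a single triangular domain $\Delta_1'$ near $B_1$, then use Theorem~\ref{fdtc} to constrain the local picture. However, your derivation of $\iota(\phi(b_1),b_1)\leq 0$ is faulty. Neither of the two assertions ``$\iota\leq\lceil\tau_{B_1}(S,\phi)\rceil$'' and ``$\lceil\tau_{B_1}(S,\phi)\rceil\leq 0$ when $\tau_{B_1}(S,\phi)<1$'' holds in general: for $\tau=\tfrac{1}{2}$ one has $\lceil\tau\rceil=1>0$, and for $\tau=-2$ item~(5) of Theorem~\ref{fdtc} permits $\iota=-1>\lceil\tau\rceil$. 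The correct deduction, which the paper uses, is that every element of the sets in items~(4) and~(5) is strictly less than $1$ once $\tau<1$, so the integer $\iota(\phi(b_1),b_1)$ is at most $0$ on $S$.

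The paper also dispatches your anticipated ``main obstacle'' more directly than by classifying wraparound domains through $\gamma_1\cap\beta_1$. It observes that any non-standard $\Delta_1'$ has its $\alpha_1\cap\beta_1$ corner $z_1$ in $-S\times\{0\}$, and that the relative positions of $y_1$ and $\theta_1$ along $\gamma_1$ force the local sign of $\phi(b_1)\cap a_1$ at $z_1$ to be \emph{negative} on $-S$. One then reverses orientation to turn $\iota(\phi(b_1),b_1)<1$ on $S$ into $\iota(\phi(b_1),b_1)>-1$ on $-S$, compares with $a_1$ via $\iota(\phi(b_1),a_1)\geq\iota(\phi(b_1),b_1)$ (using that $b_1$ starts to the right of $a_1$ on $-S$), and concludes $\iota(\phi(b_1),a_1)\geq 0$ on $-S$. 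After removing bigons near the boundary, this means there are \emph{no} negative intersection points whatsoever, hence no admissible corner $z_1$ in $-S\times\{0\}$; this rules out every non-standard $\Delta_1'$ at once, with no combinatorial enumeration of multiplicities needed. Your sketch omits both the orientation reversal and the passage from $b_1$ to $a_1$, which are the steps that make the argument go through.
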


%********************************

\begin{figure}[t]\captionsetup{width=.9\linewidth}\centering
\includegraphics[scale=3]{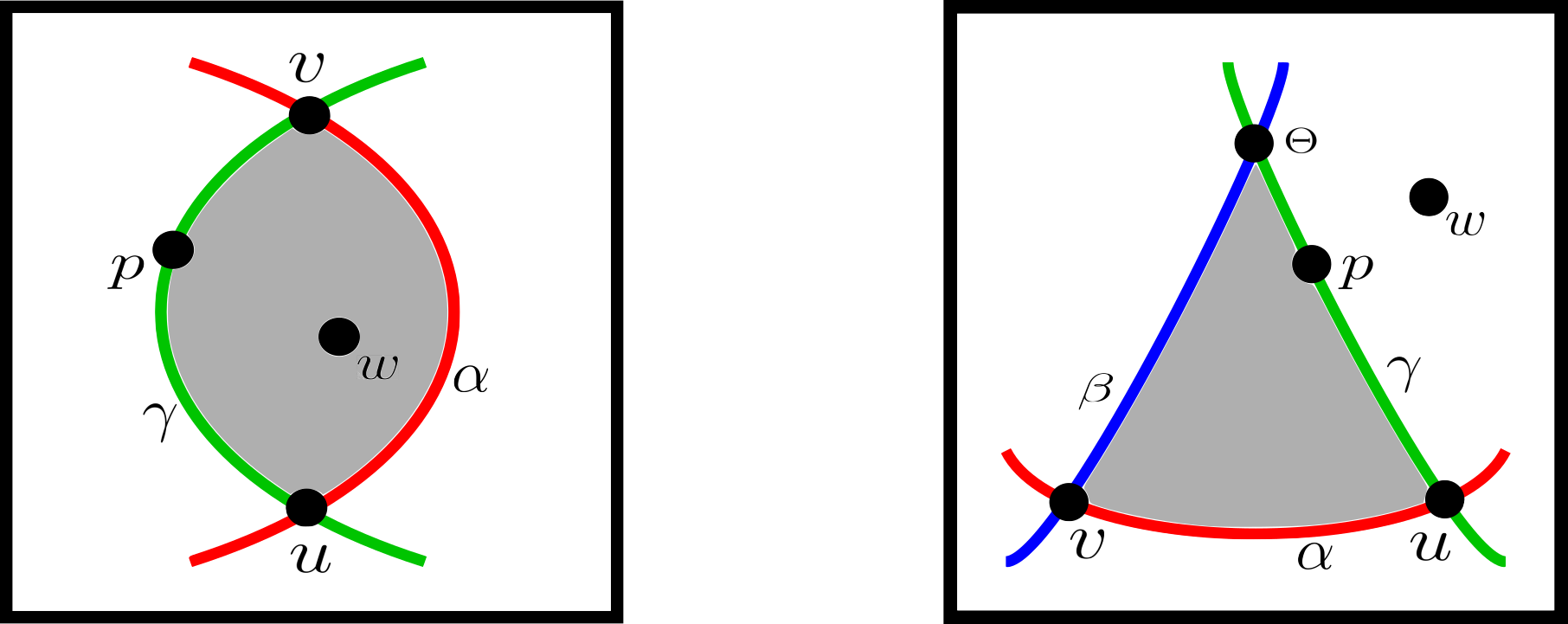}
\caption{\textbf{Left:} A Whitney disk $\psi$ from $x$ to $y$ with $n_w(\phi)=1$ and $n_p(\phi)=1$.\\ \textbf{Right:} A Whitney triangle $\psi$ connecting $\Theta$, $x$ and $y$ with $n_w(\phi)=0$ and $n_p(\phi)=1$.}
\label{whitney}
\end{figure}

%********************************

\begin{proof}
By Theorem \ref{domain}, we know $\psi=\Delta_1'+\Delta_2+...+\Delta_G$, so it suffices to show that $z_1=x_1$ and $\Delta_1'=\Delta_1$. Recall that for such a triangular domain $\Delta_1'$, the necessary boundary conditions are as depicted in Figure \ref{whitney} on the right, ignoring the point $p$ (which will be used in a later section).

Note that $x_1$ is the only intersection point between $\alpha_1$ and $\beta_1$ in $S\times\{\frac{1}{2}\}\subset\Sigma$, and that $\Delta_1$ is the only triangular domain with the proper boundary conditions connecting $\Theta_1$, $y_1$ and $x_1$. Hence, it suffices to show that there are no intersection points $z_1\in\alpha_1\cap\beta_1$ in $(-S\times\{0\})\subset\Sigma$ with a triangular domain connecting $\Theta_1$, $y_1$ and $z_1$ satisfying the proper boundary conditions.

To show this, first note that we may consider only domains $\Delta_1'$ contained in an annular neighborhood of $\gamma_1$. And, because of the relative positions of $y_1$ and $\Theta_1$ on $\gamma_1$, we can see that $\phi(b_1)$ and $a_1$ must intersect negatively on $-S\times\{0\}$ at $z_1$.

Because $\tau_{B_1}(S,\phi)<1$, we know that $\iota(\phi(b_1),b_1)<1$ on $S$ by Theorem \ref{fdtc}. Reversing orientation then yields $\iota(\phi(b_1),b_1)>-1$ on $-S$. Note that because $b_1$ starts to the right of $a_1$ on $-S$, we must have:
\begin{align*}
\iota(\phi(b_1),a_1)&\geq\iota(\phi(b_1),b_1)> -1
\end{align*}
on $-S$. Because $\iota(\phi(b_1),a_1)$ is by definition an integer, it follows that $\iota(\phi(b_1),a_1)\geq 0$ and therefore $\phi(b_1)$ intersects positively (or not at all) with $a_1$ in an annular neighborhood of $B_1$ on $-S\times\{0\}\subset\Sigma$. Moreover, near the boundary, we may remove bigon regions to guarantee that $\phi(b_1)$ intersects \emph{only} positively (or not at all) with $a_1$ in an annular neighborhood, so that there are no negative intersection points. It follows that there are no triangular domains satisfying the necessary boundary conditions and ending on an intersection point $z_1\in\alpha_1\cap\beta_1\cap (-S\times\{0\})$.
\end{proof}
Noting that $c(S,\phi)=[x]$ and $c(S_0,\phi_0)=[y]$, and setting $\frak{s}_0$ to be the $\spinc$-structure corresponding to the standard triangular class, Lemma \ref{trilem} now implies Proposition \ref{natural}. This also completes the proof of Theorem \ref{l-space}, as explained in Subsection \ref{ingred}.

%*******************************************************************************
%*******************************************************************************
%*******************************************************************************
%*******************************************************************************
%*******************************************************************************

\section{The proof of Theorem \ref{red}}\label{redpf}

When $Y$ is not an L-space, the situation becomes much more complicated: in the proof of Theorem \ref{l-space}, if $Y$ is not an L-space, the image of $f$ may not be contained in the image of $U^j$. To rectify this, we will use $\dim\mathit{HF}^+_\red(-Y)$ to bound the rank of the image of $f$ not contained in the image of $U^j$ for sufficiently large $j$.

This idea will require an argument using ``twisted coefficients" in the group ring $\ff[C_n]$ as in \cite{hm}, where $C_n=\la\zeta|\zeta^n=1\ra$ is the cyclic group of order $n$. The first task is to describe the $\ff[C_n]$-coefficient system of interest.

Using the construction of the previous section, we start with pointed Heegaard diagrams $(\Sigma,\beta,\alpha,w)$ for $(S,\phi)$ and $(\Sigma,\gamma,\alpha,w)$ for $(S_0,\phi_0)$. Additionally, choose a marked point $p$ on the curve $\gamma_1$ isotopic to the capped-off $B_1$ in $S$ (see Figure \ref{standardHD} for reference). We then define the Heegaard Floer chain group with $\ff[C_n]$-coefficients by:

\begin{align*}
\underline{CF}^+(\Sigma,\gamma,\alpha,w;\ff[C_n])&=CF^+(\Sigma,\gamma,\alpha,w;\ff)\otimes\ff[C_n]\\
\del^+(U^iu\otimes\zeta^k)&=\sum_{v\in\alpha\cap\gamma}\sum_{\substack{\psi\in\pi_2(u,v)\\ \mu(\psi)=1}}\#\hat{\cal{M}}(\psi)\cdot U^{i-n_w(\psi)}v\otimes\zeta^{k+n_p(\psi)}
\end{align*}
where $n_p(\psi)$ is the number of times that $\del\psi\subset\gamma\cup\alpha$ crosses the basepoint $p\in\gamma_1$ (see the left of Figure \ref{whitney} for reference). We can then define the ``twisted" Heegaard Floer homology group $\underline{\mathit{HF}}^+(-Y_0;\ff[C_n])$ by taking the homology of this chain complex. 

Hedden and Mark used a similar variant of Heegaard Floer homology with $\ff[C_n]$-coefficients in \cite{hm}, where they extended the Ozsv\'ath--Szab\'o surgery-exact triangle. For our purposes, their result may be stated as:

%********************************

\begin{thm}[\cite{hm}]\label{hmsurg}
There is an exact triangle: $$...\to \mathit{HF}^+(-Y)\xar{f}\underline{\mathit{HF}}^+(-Y_0;\ff[C_n])\xar{g} \mathit{HF}^+(-Y_{-\frac{1}{n}}(B_1))\to...$$ where $f$ and $g$ are $U$-equivariant, and $g$ is defined on the chain-level by: $$g(U^iu\otimes\zeta^k)=\sum_{v\in\beta\cap\alpha}\sum_{\substack{\psi\in\pi_2(\Theta,u,v)\\ \mu(\psi)=0\\ n_p(\psi)=-k~\text{mod }n}}\#\hat{\cal{M}}(\psi)\cdot U^{i-n_w(\psi)}v$$ for any $u\in\mathbb{T}_\gamma\cap\mathbb{T}_\alpha$.
\end{thm}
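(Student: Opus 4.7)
The plan is to adapt Ozsv\'ath--Szab\'o's proof of the surgery exact triangle in \cite{ozsz3}, tracking an additional $\ff[C_n]$-grading that records how often the boundary of a Whitney triangle crosses the marked point $p\in\gamma_1$. First I would introduce a third family of attaching curves $\delta=(\delta_1,\ldots,\delta_G)$ on $\Sigma$, with $\delta_i$ a small Hamiltonian pushoff of $\gamma_i\simeq\beta_i$ for $i\geq 2$, and $\delta_1$ a simple closed curve realizing the $-\tfrac{1}{n}$-framing along $B_1$ relative to the page framing (e.g.\ obtained from $\beta_1$ by winding $n$ times around $\gamma_1$). Then $(\Sigma,\delta,\alpha,w)$ is a pointed Heegaard diagram for $-Y_{-\frac{1}{n}}(B_1)$, and the pointed Heegaard quadruple $(\Sigma,\alpha,\beta,\gamma,\delta,w)$ encodes the full surgery sequence.

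Next, I would define $f$ on the triple $(\Sigma,\alpha,\beta,\gamma,w)$ by counting Whitney triangles $\psi$ with weight $U^{-n_w(\psi)}\zeta^{n_p(\psi)}$, and realize $g$ (as in the statement) via the triple $(\Sigma,\alpha,\gamma,\delta,w)$, using the selection rule $n_p(\psi)\equiv -k\pmod{n}$ to extract the appropriate $\zeta$-component of the input. A standard boundary-degeneration analysis of moduli spaces of triangles shows that both maps are $U$-equivariant chain maps, since $\zeta^{n_p}$ is multiplicative under the disk-triangle splittings that control $\del\circ f$, $g\circ\del$, and their analogs.

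Exactness would then follow the template of \cite{ozsz3}: pseudoholomorphic rectangles in the quadruple diagram $(\Sigma,\alpha,\beta,\gamma,\delta,w)$, weighted again by $U^{-n_w}\zeta^{n_p}$, produce a chain homotopy witnessing $g\circ f\simeq 0$, and analogous arguments handle the other two vertices of the triangle. Algebraically, this verifies the mapping-cone description of $\underline{\mathit{HF}}^+(-Y_0;\ff[C_n])$ from which the exact triangle follows.

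The main obstacle is the verification that the ``small triangle" counts in the annular neighborhood of $\gamma_1$, where the winding of $\delta_1$ around $\gamma_1$ creates $n$ new intersection points, assemble with the $\zeta$-weighting to produce the expected top-dimensional generator $\Theta\in\mathit{HF}^{\leq 0}(\#^{G-1}(S^1\times S^2))$ under the connect-sum description of the relevant fibered manifold. The $n_p$-values of these small triangles run through $\{0,1,\ldots,n-1\}$, and the resulting algebraic identity in $\ff[C_n]$ is precisely what substitutes the twisted complex for the untwisted one and replaces $Y_{-1}(B_1)$ by $Y_{-\frac{1}{n}}(B_1)$ in the resulting exact triangle; this is the key new ingredient supplied by Hedden and Mark.
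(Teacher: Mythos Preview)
This theorem is not proved in the present paper: it is quoted as a result of Hedden--Mark \cite{hm}, and the paper explicitly directs the reader to \cite{hm} for details. So there is no ``paper's own proof'' against which to compare your proposal.

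That said, your sketch is a reasonable outline of how the Hedden--Mark argument goes. The essential points are all present: one builds a Heegaard quadruple $(\Sigma,\alpha,\beta,\gamma,\delta,w)$ with $\delta_1$ winding $n$ times around $\gamma_1$, weights the triangle and rectangle counts by $\zeta^{n_p}$, and then runs the Ozsv\'ath--Szab\'o exactness argument. Your identification of the ``main obstacle'' is also correct: the crux is the local model near $\gamma_1$, where the $n$ small triangles have $n_p$-values $0,1,\ldots,n-1$, and the resulting identity in $\ff[C_n]$ (essentially $1+\zeta+\cdots+\zeta^{n-1}=0$ in the appropriate sense) is exactly what replaces the single small triangle in the untwisted $-1$-surgery triangle. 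One point you leave vague is the third map in the triangle and the verification of exactness at the other two vertices; in practice this requires the full iterated-cone/double-complex argument rather than just a single null-homotopy of $g\circ f$, but that is standard and is what \cite{hm} carries out.
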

In the theorem, $n_p(\psi)$ is defined similarly for Whitney triangles as it is for Whitney disks, and it may be helpful to see Figures \ref{whitney} and \ref{standard} for reference. For example, the triangular domain on the left of Figure \ref{standard} has $n_p=1$, the domain in the middle has $n_p=2$, and the domain on the right has $n_p=0$.

For more details on $\ff[C_n]$-coefficients, we direct the reader to the original paper \cite{hm}, and for more details on ``twisted coefficients" in general, see \cite{ap} or \cite{ozsz3}.

Using this construction and Lemma \ref{trilem}, we can prove an analogue of Proposition \ref{natural} for the map $g$ appearing in the Hedden--Mark exact triangle.

%********************************

\begin{prop}\label{natural2}
If $\tau_{B_1}(S,\phi)<-n+1$, then the map $g:\underline{\mathit{HF}}^+(-Y_0;\ff[C_n])\to\mathit{HF}^+(-Y_{-\frac{1}{n}}(B_1))$ defined in Theorem \ref{hmsurg} satisfies:
\begin{align*}
g([y\otimes1])&=c(S,D_{B_1}\circ\phi)\\
g([y\otimes\zeta^i])&=0~~\text{ for }~0<i\leq n-1.
\end{align*}
%for $0<i<n$.
\end{prop}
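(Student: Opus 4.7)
The plan is to closely mimic the proof of Proposition \ref{natural}, this time tracking the marked-point multiplicity $n_p(\psi)$ of each Whitney triangle in order to decompose the count into the $\ff[C_n]$-twisted pieces appearing in $g$. First I would unpack Theorem \ref{hmsurg} applied to the generator $y\otimes \zeta^k$: since $y$ sits in the $U^0$-part of the chain complex, only triangles with $n_w(\psi)=0$ can contribute, and we are summing over classes $\psi\in\pi_2(\Theta,y,v)$ with $v\in\mathbb{T}_\beta\cap\mathbb{T}_\alpha$, $\mu(\psi)=0$, and $n_p(\psi)\equiv -k\pmod{n}$.

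The hypothesis $\tau_{B_1}(S,\phi)<-n+1$ in particular gives $\tau_{B_1}(S,\phi)<1$, so Lemma \ref{trilem} applies verbatim: every such $\psi$ must satisfy $v=x$ and have domain $D(\psi)=\Delta_1+\Delta_2+\cdots+\Delta_G$ equal to the standard one. Direct inspection of Figure \ref{standard} shows that each standard triangle $\Delta_i$ is disjoint from the marked point $p$ on $\gamma_1$, so $n_p(\psi)=0$. It follows that for $k=0$ the standard triangle is the unique contribution and gives $g([y\otimes 1])=[x]$, while for $0<k\leq n-1$ no triangle satisfies the congruence $n_p\equiv -k\pmod{n}$, forcing $g([y\otimes\zeta^k])=0$.

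What remains is to identify the class $[x]$ in $\mathit{HF}^+(-Y_{-1/n}(B_1))$ with $c(S,D_{B_1}\circ\phi)$. This is where the full strength of the hypothesis $\tau_{B_1}(S,\phi)<-n+1$ is meant to enter: under the identification used in the Hedden--Mark twisted surgery triangle -- matching the chain complex receiving $g$ with a Heegaard chain complex for the open book $(S,D_{B_1}\circ\phi)$, after absorbing the $n$ copies of the Dehn twist into the $\ff[C_n]$-coefficient system -- the cycle $x$ realizes the contact class. I expect this last chain-level identification to be the main obstacle: the triangle-counting step is essentially immediate from Lemma \ref{trilem}, but one must carefully unpack the Hedden--Mark construction (or invoke a twisted analogue of Proposition \ref{natural} applied to an auxiliary capped-off open book) to confirm that $x$ and $c(S,D_{B_1}\circ\phi)$ agree under the identification of the third vertex of the twisted triangle.
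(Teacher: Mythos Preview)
Your triangle-counting instinct is correct, but you are applying Lemma~\ref{trilem} to the wrong Heegaard triple. The map $g$ in Theorem~\ref{hmsurg} lands in $\mathit{HF}^+(-Y_{-1/n}(B_1))$, so the $\beta$-curves in the triple $(\Sigma,\alpha,\beta,\gamma,w)$ underlying $g$ must describe $Y_{-1/n}(B_1)$, not $Y$. Concretely, the triple is the standard capping-off triple for the open book $(S,D_{B_1}^n\circ\phi)$, which still caps off to $(S_0,\phi_0)$. Your step ``Lemma~\ref{trilem} applies verbatim since $\tau_{B_1}(S,\phi)<1$'' is therefore pointed at the triple for $(S,\phi)$ and produces a class $[x]\in\mathit{HF}^+(-Y)$, which is the wrong group; this is precisely why you then find yourself needing an ad~hoc ``chain-level identification'' of $[x]$ with a contact class in $\mathit{HF}^+(-Y_{-1/n}(B_1))$.

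The paper avoids this obstacle entirely by working from the start in the triple for $(S,D_{B_1}^n\circ\phi)$. The hypothesis $\tau_{B_1}(S,\phi)<-n+1$ is used exactly to ensure $\tau_{B_1}(S,D_{B_1}^n\circ\phi)=n+\tau_{B_1}(S,\phi)<1$, so Lemma~\ref{trilem} applies to \emph{this} open book: the unique Maslov-index-zero triangle from $y$ is the standard one, which has $n_p=0$ and $n_w=0$, and its output vertex $x$ is by construction the Honda--Kazez--Mati\'c generator for $c(S,D_{B_1}^n\circ\phi)$. No further identification is needed. (Note also that the target should be $c(S,D_{B_1}^n\circ\phi)$, as used in the proof of Theorem~\ref{red}; the single twist in the displayed statement is a typo.) Once you switch to the correct triple, your argument becomes exactly the paper's.
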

\begin{proof}
Note that $(S,D_{B_1}^n\circ\phi)$ caps off to $(S_0,\phi_0)$ for any $n$, and consider the pointed Heegaard triple diagram $(\Sigma,\alpha,\beta,\gamma, w)$ for the capping off cobordism $W_0:-Y_0\to-Y_{-\frac{1}{n}}$ applied to $(S,D_{B_1}^n\circ\phi)$.

If $\tau_{B_1}(S,\phi)<-n+1$, then $\tau_{B_1}(S,D_{B_1}^n\circ\phi)=1$. In particular, by Lemma \ref{trilem}, there is a unique homotopy class of Whitney triangles $\psi\in\pi_2(\Theta,y,x)$ with $\mu(\psi)=0$, and no Whitney triangles in $\pi_2(\Theta,y,z)$ with $\mu(\psi)=0$ for any $z\neq x$. Now, note that $n_p(\psi)=0$ and $n_w(\psi)=0$, and the result follows.
\end{proof}

%********************************

The goal of the proof of Theorem \ref{red} is to use the Hedden--Mark exact triangle, together with \ref{natural2}, to show that the open book $(S,D_{B_1}^n\circ\phi)$ supports a tight contact structure. But, as mentioned in the beginning of this section, we will need to use the bound on the size of the part of $\im{f}$ not contained in the image of $U^j$ to conclude this.

To do that, we would like to compare the rank of the submodule $\underline{c}_n\subset\underline{\mathit{HF}}^+(-Y_0;\ff[C_n])$ generated by $\{[y\otimes1],...,[y\otimes\zeta^{n-1}]\}$, to the rank of $\mathit{HF}^+_\red(Y)$. The following two lemmas will help with this comparison:

%********************************

\begin{lem}\label{nonzero}
If $c_\red(S_0,\phi_0)\neq 0$, then the classes $[y\otimes\zeta^i]\neq 0$ for all $i$, and $\underline{c}_n\cap\im{U^j}\equiv 0$ for sufficiently large $j$.
\end{lem}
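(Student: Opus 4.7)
The plan is to exploit the $U$-equivariant augmentation chain map
\[ \pi\colon \underline{CF}^+(-Y_0;\ff[C_n]) \to CF^+(-Y_0;\ff),\qquad u\otimes\zeta^k\mapsto u. \]
Setting $\zeta=1$ drops the $\zeta^{n_p(\psi)}$-weights in the twisted differential and recovers the ordinary one, so $\pi$ is indeed a chain map; it commutes with $U$ and sends each cycle $y\otimes\zeta^i$ to the contact cycle $y$. For the first claim, $c_\red(S_0,\phi_0)\neq 0$ forces $c(S_0,\phi_0)\neq 0$ in $\mathit{HF}^+(-Y_0)$ (since $c_\red$ is the image of $c$ in the quotient by $\bigcap_j\im U^j$), so $\pi([y\otimes\zeta^i])=c(S_0,\phi_0)\neq 0$ and consequently $[y\otimes\zeta^i]\neq 0$ in $\underline{\mathit{HF}}^+(-Y_0;\ff[C_n])$.

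For the second claim, the key observation is that multiplication by $U$ and by $\zeta$ commute at the chain level, so $\im U^j\subseteq\underline{\mathit{HF}}^+$ is an $\ff[C_n]$-submodule and $\underline{c}_n=\ff[C_n]\cdot[y\otimes 1]$ is a cyclic $\ff[C_n]$-submodule. Hence $\underline{c}_n\cap\im U^j$ is an $\ff[C_n]$-submodule of the finite-dimensional cyclic module $\underline{c}_n$ (of $\ff$-dimension at most $n$), and the descending chain $\{\underline{c}_n\cap\im U^j\}_{j\geq 0}$ stabilizes to $M := \underline{c}_n\cap\bigcap_j\im U^j$; it suffices to show $M=0$. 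For any nonzero $x=[y\otimes P(\zeta)]\in M$, $U$-equivariance of $\pi$ implies $\pi(\bigcap_j\im U^j)\subseteq \bigcap_j\im U^j$, so $P(1)\cdot c(S_0,\phi_0)=\pi(x)\in\bigcap_j\im U^j$, and $c_\red\neq 0$ forces $P(1)=0$.

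The main obstacle is upgrading ``$P(1)=0$'' to ``$P=0$'', because $\pi$ alone only detects the augmentation of $P$ and applying $\pi$ to each $\zeta^k x\in M$ yields no new equation. The plan is to invoke additional characters of $C_n$: after extending scalars to $\ff_{2^k}$ for $k$ large enough that $\zeta^n-1$ splits, each $n$th root of unity $\omega\in\ff_{2^k}^\times$ gives an evaluation $\pi_\omega\colon\zeta\mapsto\omega$. These maps no longer land in the untwisted complex but in complexes whose differential carries the compensating factor $\omega^{n_p(\psi)}$, so a compatibility step is needed to propagate non-vanishing of $c_\red(S_0,\phi_0)$ into these $\omega$-twisted targets. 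Once established, the analogous argument yields $P(\omega)=0$ for every $n$th root of unity $\omega$, and a Vandermonde argument forces $P=0$, contradicting $x\neq 0$. This works cleanly when $n$ is coprime to $2$, where $\ff_2[C_n]$ is semisimple and $\zeta^n-1$ has $n$ distinct roots over $\overline{\ff_2}$; for general $n$ one either restricts to $n$ coprime to $2$ when invoking the lemma (a harmless choice in the applications of Proposition~\ref{natural2}) or uses a finer iterated-reduction argument via the augmentation-ideal filtration of $\ff_2[C_n]$.
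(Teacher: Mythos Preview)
Your argument for the first claim --- using the $U$-equivariant augmentation chain map $\pi\colon\zeta\mapsto 1$ to push $[y\otimes\zeta^i]$ forward to $[y]=c(S_0,\phi_0)\neq 0$ --- is exactly the paper's approach (the paper calls this map $h$).

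For the second claim, the paper's proof is considerably more direct than yours: it simply reuses the same augmentation $h$ and $U$-equivariance to observe that if some generator $[y\otimes\zeta^i]$ lay in $\im U^j$, then $[y]=h([y\otimes\zeta^i])$ would lie in $\im U^j$ as well, contradicting $c_\red(S_0,\phi_0)\neq 0$. The paper stops there and declares $\underline{c}_n\cap\im U^j\equiv 0$. You are right to notice that this only shows each \emph{generator} avoids $\im U^j$, not that an arbitrary combination $[y\otimes P(\zeta)]$ with $P(1)=0$ does; in that sense you have identified something the paper glosses over.

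However, your proposed fix has a genuine gap of its own. The evaluation maps $\pi_\omega$ for $\omega\neq 1$ do not land in the untwisted complex $CF^+(-Y_0;\ff)$: they land in a complex whose differential counts disks with weight $\omega^{n_p(\psi)}$, which is a genuinely different local-coefficient theory. The hypothesis $c_\red(S_0,\phi_0)\neq 0$ is a statement about the \emph{untwisted} theory and says nothing about whether the class of $y$ is nonzero, or outside $\im U^j$, in these $\omega$-twisted complexes. So the ``compatibility step'' you flag is not a routine verification --- it is the entire difficulty, and you have not supplied it. Restricting to $n$ odd or passing to the augmentation-ideal filtration does not address this: the obstruction is the target of $\pi_\omega$, not the ring structure of $\ff[C_n]$.
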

\begin{proof}
If $c_\red(S_0,\phi_0)\neq 0$, then it follows that the full untwisted invariant $c(S_0,\phi_0)=[y]$ in $\mathit{HF}^+(-Y_0)$ is non-zero, and is not in the image of $U^j$ for some fixed $j$ which is sufficiently large.

Consider the $\ff$-linear coefficient map
\begin{align*}
h:\underline{\mathit{HF}}^+(-Y_0;\ff[C_n])\to \mathit{HF}^+(-Y_0;\ff)
\end{align*}
given by $h(\zeta)=1$. This map is clearly $U$-equivariant, and we can see that $h([y\otimes\zeta^i])=[y]$ for all $i$. Because $[y]\neq 0$, it follows that $[y\otimes\zeta^i]\neq 0$, too. Similarly, if $[y\otimes\zeta^i]\in\im{U^j}$ for some $i$, it follows that $[y]\in\im{U^j}$ by $U$-equivariance of $h$, which is a contradiction. This shows that $[y\otimes\zeta^i]\neq 0$ for all $i$, and $\underline{c}_n\cap\im{U^j}\equiv 0$.
\end{proof}

%********************************

\begin{lem}\label{rank}
If $Y_0$ is a rational homology sphere and $c_\red(S_0,\phi_0)\neq 0$, then $\dim_\ff\underline{c}_n=n$.
\end{lem}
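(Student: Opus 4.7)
The plan is to show the $n$ classes $\{[y\otimes\zeta^i]\}_{i=0}^{n-1}$ are $\ff$-linearly independent in $\underline{\mathit{HF}}^+(-Y_0;\ff[C_n])$, by explicitly ``untwisting'' the $\ff[C_n]$-coefficient system via a chain-level change of basis. The main input is the hypothesis that $Y_0$ is a rational homology sphere.

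The cocycle $\psi\mapsto n_p(\psi)$ governing the twisted differential is (the mod-$n$ reduction of) a $\zz$-valued cocycle on the Heegaard Floer chain complex. Under the standard correspondence (see \cite{ozsz3}) between such cocycles and twisted coefficient systems, this cocycle represents a class in $H^1(Y_0;\zz)$. But because $Y_0$ is a QHS, $H^1(Y_0;\zz)=\text{Hom}(H_1(Y_0),\zz)=0$, so the class vanishes. Consequently, one can find a cochain $\eta:\mathbb{T}_\gamma\cap\mathbb{T}_\alpha\to\zz$ satisfying $n_p(\psi)=\eta(u)-\eta(v)$ for every holomorphic Whitney disk $\psi$ from $u$ to $v$.

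Using $\eta$, define a chain map $\Phi:\underline{CF}^+(-Y_0;\ff[C_n])\to CF^+(-Y_0;\ff)\otimes_\ff\ff[C_n]$ by $\Phi(u\otimes\zeta^k)=u\otimes\zeta^{k+\eta(u)}$. A direct computation shows that $\Phi$ intertwines the twisted differential with $\partial\otimes\mathrm{id}$, and that it is invertible on the chain level. Thus $\Phi$ induces an $\ff[U]\otimes_\ff\ff[C_n]$-module isomorphism
$$\underline{\mathit{HF}}^+(-Y_0;\ff[C_n])\cong\mathit{HF}^+(-Y_0;\ff)\otimes_\ff\ff[C_n]$$
on homology, sending $[y\otimes\zeta^i]$ to $[y]\otimes\zeta^{i+\eta(y)}$.

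By Lemma \ref{nonzero}, $[y]\neq 0$ in $\mathit{HF}^+(-Y_0;\ff)$, so the cyclic submodule $[y]\cdot\ff[C_n]\subset\mathit{HF}^+(-Y_0;\ff)\otimes_\ff\ff[C_n]$ is free of rank one as an $\ff[C_n]$-module, with $\ff$-basis $\{[y]\otimes\zeta^j\}_{j=0}^{n-1}$. The $n$ images $\{[y]\otimes\zeta^{i+\eta(y)}\}_{i=0}^{n-1}$ are a permutation of this basis and hence $\ff$-linearly independent; pulling back through $\Phi^{-1}$ yields $\dim_\ff\underline{c}_n=n$. The main obstacle I anticipate is the untwisting step: specifically, identifying the combinatorial cocycle $n_p$ with a cohomology class on $Y_0$ in the sense of Ozsv\'ath--Szab\'o, which is a standard but somewhat delicate part of the twisted coefficient formalism (requiring, among other things, that $n_p$ vanishes on periodic domains, which follows from $H_2(Y_0;\zz)=0$).
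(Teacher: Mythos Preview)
Your argument is correct, but it takes a different route from the paper's.  The paper argues by contradiction at the level of chains: assuming an $\ff$-linear dependence among the $[y\otimes\zeta^i]$, it unwinds the boundary relation $\partial v=\sum_{i\in I} y\otimes\zeta^i$ and, after some bookkeeping, produces disks $d_i,d_{i'},D_i,D_{i'}$ whose combination $P=d_i-D_i+D_{i'}-d_{i'}$ is a periodic domain with $n_p(P)=i-i'\neq 0$.  Since a rational homology sphere has no nontrivial periodic domains (equivalently $H_2(Y_0;\zz)=0$), this is impossible.  Your approach instead uses the same underlying fact --- that $n_p$ vanishes on periodic domains when $Y_0$ is a QHS --- \emph{structurally}: because $n_p(\psi)$ then depends only on the endpoints of $\psi$, you can write $n_p(\psi)=\eta(u)-\eta(v)$ and globally untwist the complex via $u\otimes\zeta^k\mapsto u\otimes\zeta^{k+\eta(u)}$, reducing the lemma to the evident fact that $\{[y]\otimes\zeta^j\}$ is a basis of $[y]\cdot\ff[C_n]$ once $[y]\neq 0$.

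What each approach buys: your untwisting argument is cleaner, more conceptual, and immediately reusable (it shows $\underline{\mathit{HF}}^+(-Y_0;\ff[C_n])\cong \mathit{HF}^+(-Y_0)\otimes_\ff\ff[C_n]$ as $\ff[U]$-modules whenever $Y_0$ is a QHS, which is stronger than what is needed).  The paper's argument is more self-contained --- it does not invoke the twisted-coefficients formalism from \cite{ozsz3} beyond what is already set up, and it makes the role of periodic domains explicit at the level of individual disks.  The ``obstacle'' you flag is real but mild: the key point is that for a QHS there are no nonzero periodic domains, so $n_p$ is automatically a coboundary over $\zz$ (no appeal to $H^1(Y_0;C_n)$ is needed, which would be the wrong group); you should also note that $\eta$ must be defined separately on each $\spinc$-summand, which is harmless since the differential preserves $\spinc$-structures.
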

\begin{proof}
We'll show by way of contradiction that $[y\otimes1],...,[y\otimes\zeta^i]$ are linearly independent over $\ff$. The goal of the proof is to use a hypothetical dependence relation to find disks crossing $p$ some prescribed number of times, and take a linear combination of the (domains of the) disks to produce a periodic domain $P$ with $n_p(P)\neq 0$. To such a $P$, we can then associate a singular homology class $H(P)\in H_2(Y_0;\zz)$ by capping off the boundary curves with disks in the $\alpha$ and $\beta$ handlebodies. Note that if $n_p(P)\neq 0$, then $P$ is nontrivial and consequently $H(P)\neq 0$. Because $Y_0$ is a rational homology sphere, we have $H_2(Y_0;\zz)=0$, so this would yield a contradiction. Thus, if we can produce such a $P$, the proof is complete.

Now, suppose we have an $\ff$-dependence relation: $$\sum_{i=0}^{n-1}a_i[y\otimes\zeta^i]=0,\hspace{1cm}a_i\in\ff,~a_i\neq 0\text{ for some }i$$ and set $I=\{i|a_i\neq 0\}$. Because $[y\otimes\zeta^i]\neq 0$ for all $i$ by Lemma \ref{nonzero}, we have $|I|\geq 2$. By definition, there is a chain $v\in\underline{CF}^+(\Sigma,\gamma,\alpha,w;\ff[C_n])$ with $\del v=\sum_{i\in I}y\otimes\zeta^i$.

We can write $v=\sum_{j=0}^m U^{k_j}v_j\otimes\zeta^{\ell_j}$ where $v_j\in\mathbb{T}_\gamma\cap\mathbb{T}_\alpha$ for all $j$, and $\del(U^{k_j}v_j\otimes\zeta^{\ell_j})=y\otimes\zeta^{i_j}+z_j$ for some chains $z_j\in\underline{CF}^+(\Sigma,\gamma,\alpha,w;\ff[C_n])$. Because $[y\otimes\zeta^{i_j}]\neq 0$ for all $j$, again by Lemma \ref{nonzero}, we know that $[z_j]\neq 0$ for all $j$, too.

Now, if $y\otimes\zeta^i$ appears as a summand of $\del v_j$ for any $j$ with $i_j\neq i$, then we have two disks $D_1$ and $D_2$ from $v_j$ to $y$, with $n_p(D_1)=i_j-\ell_j$ and $n_p(D_2)=i-\ell_j$. We can then take $P=D_1-D_2$, which is a periodic domain, and nontrivial because $n_p(P)=i_j-i\neq 0$. This would yield a contradiction, as described above.

So, we may reindex $\{0,...,m\}$ to match with the index set $I$, i.e. $v=\sum_{i\in I}U^{k_i}v_i\otimes\zeta^{\ell_i}$, where $v_i\in\mathbb{T}_\gamma\cap\mathbb{T}_\alpha$, $\del(U^{k_i}v_i\otimes\zeta^{\ell_i})=y\otimes\zeta^{i_j}+z_i$, $z_i\in\underline{CF}^+(\Sigma,\gamma,\alpha,w;\ff[C_n])$ and $[z_i]\neq 0$. Moreover, $y\otimes\zeta^j$ does not appear as a summand of $z_i$ for any $i\neq j$, so we may simply assume $y\otimes\zeta^j$ does not appear as a summand of $z_i$ for any $i,j$.

We now have:
\begin{align*}
\sum_{i\in I}y\otimes\zeta^i&=\del v\\
&=\sum_{i\in I}\del(U^{k_i}v_i\otimes\zeta^{\ell_i})\\
&=\sum_{i\in I}y\otimes\zeta^i+\sum_{i\in I}z_i
\end{align*}
Because $y\otimes\zeta^j$ does not appear as a summand of $z_i$ for any $i,j$, the equation above forces $\sum_{i\in I}z_i=0$. In particular, there are an even number of copies of $z_i$ for each $i$. Because in homology $[z_i]\neq 0$ for all $i$, it follows that the chains $z_i\neq 0$ for all $i$, too. So, we may write $z_i=U^{r_i}z_i'\otimes\zeta^{s_i}+z_i''$ where $z_i'\in\mathbb{T}_\gamma\cap\mathbb{T}_\alpha$ and $z_i''\in\underline{CF}^+(\Sigma,\gamma,\alpha,w;\ff[C_n])$.

Since there are an even number of copies of $z_i$ for each $i$, we can guarantee there are an even number of copies of $U^{r_i}z_i'\otimes\zeta^{s_i}$ for each $i\in I$. In particular, for each $i$, there are $v_i$ and $v_{i'}$ with $i\neq i'$ and disks $D_i,D_{i'}$ from $v_i,v_{i'}$ to $z_i'$, respectively, with $n_p(D_i)=r_i-\ell_i$ and $n_p(D_{i'})=r_i-\ell_{i'}$. Additionally, for each $i$, we have a disk $d_i$ from $v_i$ to $y$ with $n_p(d_i)=i-\ell_i$. We can then choose any $i\in I$ and take $P=d_i-D_i+D_{i'}-d_{i'}$. Observe that $P$ is a periodic domain with $n_p(P)=i-i'\neq 0$, and hence nontrivial.

\end{proof}

%********************************

\begin{rem}
It is worth mentioning that one can prove, using an argument similar to that of \cite{hkm}, that the submodule $\underline{c}_n$ is the contact submodule $\underline{c}(S_0,\phi_0)\subset\underline{HF}^+(-Y_0;\ff[C_n])$ studied by Hedden and Mark in \cite{hm}. A similar argument to that of the proof above shows that this submodule is either rank 0 or rank $n$ over $\ff$ when $Y_0$ is a rational homology sphere.
\end{rem}

We are now ready to prove Theorem \ref{red}. The reader is encouraged to compare the proof to that of Theorem \ref{l-space}, recorded in Subsection \ref{ingred}.

%********************************

\begin{proof}[Proof of Theorem \ref{red}]
Fix $n=\dim\mathit{HF}^+_\red(Y)+1$, and consider the Hedden--Mark exact triangle from Theorem \ref{hmsurg}:

$$...\to\mathit{HF}^+(-Y)\xar{f}\underline{\mathit{HF}}^+(-Y_0;\ff[C_n])\xar{g}\mathit{HF}^+(-Y_{-\frac{1}{n}}(B_1))\to...$$

By assumption, we have $\tau_{B_1}(S,\phi)<-n+1$ (in either case (1) or (2) of Theorem \ref{red}), so it follows that $\tau_{B_1}(S,D_{B_1}^{n}\circ\phi)<(-n+1)+n=1$. Thus, by Proposition \ref{natural2}:
\begin{align*}
g(\underline{c}_n)&=c(S,D_{B_1}^n\circ\phi)\in\mathit{HF}^+(-Y_{-\frac{1}{n}}(B_1))
\end{align*}

Because $c_\red(S_0,\phi_0)\neq 0$, Lemma \ref{nonzero} implies that $\underline{c}_n\cap\im{U^j}\equiv 0$ for some fixed sufficienlty large $j$. By the $U$-equivariance of $f$, it follows that the preimage $f^{-1}(\underline{c}_n)\subset \mathit{HF}^+(-Y)$ is contained in $\coker{U^j}$, where $f:\mathit{HF}^+(-Y)\to\underline{\mathit{HF}}^+(-Y_0;\ff[C_n])$ is the map in the Hedden--Mark exact triangle above. Because additionally $Y_0$ is a rational homology sphere, we can conclude from Lemma \ref{rank} that $\dim\underline{c}_n=n$. By definition of $n$ and $\mathit{HF}^+_\red(Y)$, we have:
\begin{align*}
\dim\underline{c}_n&=n\\
&=\dim\mathit{HF}^+_\red(Y)+1\\
&\geq\dim\coker U^j+1\\
&>\dim\coker U^j\\
&\geq \dim f^{-1}(\underline{c}_n)
\end{align*}
so $\underline{c}_n$ cannot be contained in the image of $f$. Because $\underline{c}_n\not\equiv 0$, by exactness we must then have $c(S,D_{B_1}^n\circ\phi)=g(\underline{c}_n)\neq0$ in $\mathit{HF}^+(-Y_{-\frac{1}{n}}(B_1))$.

It follows by Theorem \ref{hkmthm} that the contact structure compatible with $(S,D_{B_1}^n\circ\phi)$ is tight, so $\tau_{B_i}(S,D_{B_1}^n\circ\phi)\geq0$ for all $i$, by Theorem \ref{fdtc}. Finally, we then have both $\tau_{B_1}(S,\phi)\geq-n$ and $\tau_{B_i}(S,\phi)\geq0$ for all $i\neq1$, which contradicts the assumptions of either case (1) or (2) in the statement of Theorem \ref{red}.
\end{proof}

\begin{rem}
In general, if $\phi$ is pseudo-Anosov and $\xi_{(S,\phi)}$ is tight, then we additionally have $\tau_{B_i}(S,\phi)>0$ for all $i$. Using this improved inequality at the end of the proofs of Theorems \ref{l-space} and \ref{red} allows us to strengthen some of the inequalities in the statements of these theorems.

Similarly, if $\phi$ is pseudo-Anosov, $S$ has connected binding $B$ and $\tau_B(S,\phi)=1$, then we have $c_\red(S,\phi)\neq 0$. We can use this to improve our bounds in Corollaries \ref{l-space2} and \ref{frac}. See the examples in the following section for applications of these improved bounds.
\end{rem}

%*******************************************************************************
%*******************************************************************************
%*******************************************************************************
%*******************************************************************************
%*******************************************************************************

\section{Applications}\label{app}

\subsection{Floer homology of branched covers}\label{cyclicpf}

Given an open book $(S,\phi)$ for $Y$ and any $n\geq 1$, we may form the cyclic branched cover $\Sigma_n(Y,\del S)$ over the binding (thought of as a link in $Y$) by taking the ambient manifold of the open book $(S,\phi^n)$. Note that the diffeomorphisms $(\phi_0)^n$ and $(\phi^n)_0$ are isotopic (rel. boundary) on $S_0$, so the open book $(S_0,\phi^n_0)$ is well-defined.

%********************************

\begin{proof}[Proof of Corollary \ref{cyclic}]

If $\tau_{B_1}(S,\phi)<0$ and $\tau_B(S_0,\phi_0)>0$, we can choose $n$ sufficiently large that $\tau_{B_1}(S,\phi^n)=n\tau_{B_1}(S,\phi)<-1$ and $\tau_B(S_0,\phi_0^n)=n\tau_B(S_0,\phi_0)>1$ simultaneously. For this choice of $n$ (and all larger), Corollary \ref{l-space2} implies that $\Sigma_n(Y,\del S)$ is not an L-space.

For the rank bound, when we further assume $\Sigma_n(Y_0,\del S_0)$ is a rational homology sphere, Corollary \ref{frac} implies that:
\begin{align*}
n\tau_{B_1}(S,\phi)&=\tau_{B_1}(S,\phi^n)\\
&\geq -\dim\mathit{HF}^+_\red(\Sigma_n(Y,\del S))-1
\end{align*}
for $n$ sufficiently large. Hence, $\dim\mathit{HF}^+_\red(\Sigma_n(Y,\del S))\geq-n\tau_{B_1}(S,\phi)-1$.
\end{proof}

%*******************************************************************************

\begin{figure}[t]\captionsetup{width=.9\linewidth}\centering
\includegraphics[scale=3]{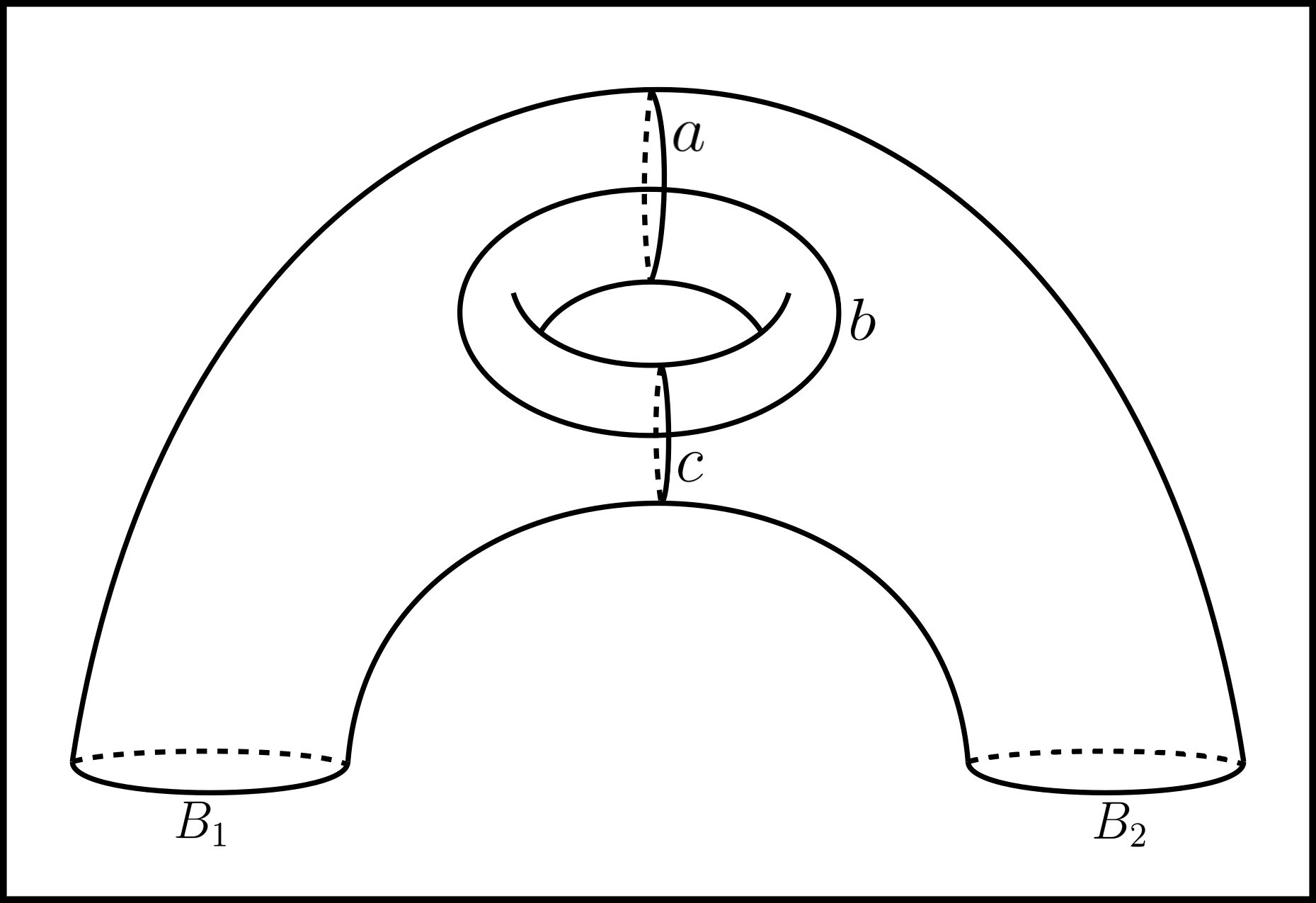}
\caption{Generating curves for genus one, two boundary component open books.
}
\label{g1r2obd}
\end{figure}

\subsection{Lower bounds on $\dim\mathit{HF}^+_\red(Y)$ from abstract open books}
Using Theorem \ref{red} and Corollary \ref{frac}, we can quickly find lower bounds on $\dim\mathit{HF}^+_\red(Y)$ when $Y$ is the ambient manifold of an open book with two binding components.

%********************************

\begin{ex}\label{beex}
Let $a,b,c,B_1$, and $B_2$ be the curves on the surface $S$ depicted in Figure \ref{g1r2obd}, and define $\psi=D_aD_b^{-1}D_c(D_aD_b)^{-6}$. Let $\phi=D_{B_1}^{n_1}D_{B_2}^{n_2}\psi^k$, and consider the open book $(S,\phi)$, which was originally studied by Baldwin and Etynre in \cite{be}. As mentioned in \cite{be}, $\phi$ is pseudo-Anosov by a construction of Penner \cite{p}: if $S^+\cup S^-$ is a collection of filling curves on $S$, where the curves in $S^+$ and in $S^-$ are pairwise disjoint, then any product of positive Dehn twists along $S^+$ and negative Dehn twists along $S^-$ is pseudo-Anosov. Note that this same construction also shows that $\phi_0$ is pseudo-Anosov.

Additionally, one can check that $\tau_{B_1}(S,\phi)=n_1$ and $\tau_B(S_0,\phi_0)=n_2+k$. Hence, Corollary \ref{frac} (with the remark at the end of Section \ref{redpf}) implies that $\dim\mathit{HF}^+_\red(Y)>-n_1-1$ whenever $Y_0$ is a rational homology sphere and $n_2+k\geq1$. In particular, whenever $n_1\leq-1$ and $k\geq1-n_2$, we have that $Y$ is not an L-space. Taking $\phi^{-1}$ instead also yields that $Y$ is not an L-space for $n_1\geq1$ and $k\leq n_2-1$.
\end{ex}

The same method as in Example \ref{beex} can be used to bound the reduced Floer homology for the ambient manifolds of many different open books on a suface $S$ with genus $g=1$ and $r=2$ boundary components. For example:

%********************************

\begin{ex}
Using the same notation as above, note that any open book on $S$ can be written as a product:
\begin{align*}
\phi=\prod_{i=1}^kD_{B_1}^{n_1}D_{B_2}^{n_2}D_a^{\alpha_1}D_b^{\beta_1}D_c^{\gamma_1}...D_a^{\alpha_k}D_b^{\beta_k}D_c^{\gamma_k}
\end{align*}
If $k>0$, and $\alpha_1+\gamma_k>0$, $\alpha_i+\gamma_{i-1}>0$, and $\beta_i<0$ for all $i$, then one can calculate that $\tau_{B_1}(S,\phi)=n_1$, $\tau_{B_2}(S,\phi)=n_2$ and $\tau_B(S_0,\phi_0)=n_2$. As in the previous example, both $\phi$ and $\phi_0$ are pseudo-Anosov by Penner's construction \cite{p}. It follows that, if $n_2\geq1$ and $Y_0$ is a rational homology sphere, then $\dim\mathit{HF}^+_\red(Y)> -n_1-1$. In particular, whenever $n_1\leq-1$ and $n_2\geq1$, then $Y$ is not an L-space.
\end{ex}

This same idea can also be easily applied to open books of higher genus which are given as a product of boundary Dehn twists with symmetric mapping classes.

%*******************************************************************************
%*******************************************************************************
%*******************************************************************************
%*******************************************************************************
%*******************************************************************************

\section{Further Directions and Questions}\label{further}

\begin{figure}[t]\captionsetup{width=.9\linewidth}\centering
\includegraphics[scale=1.5]{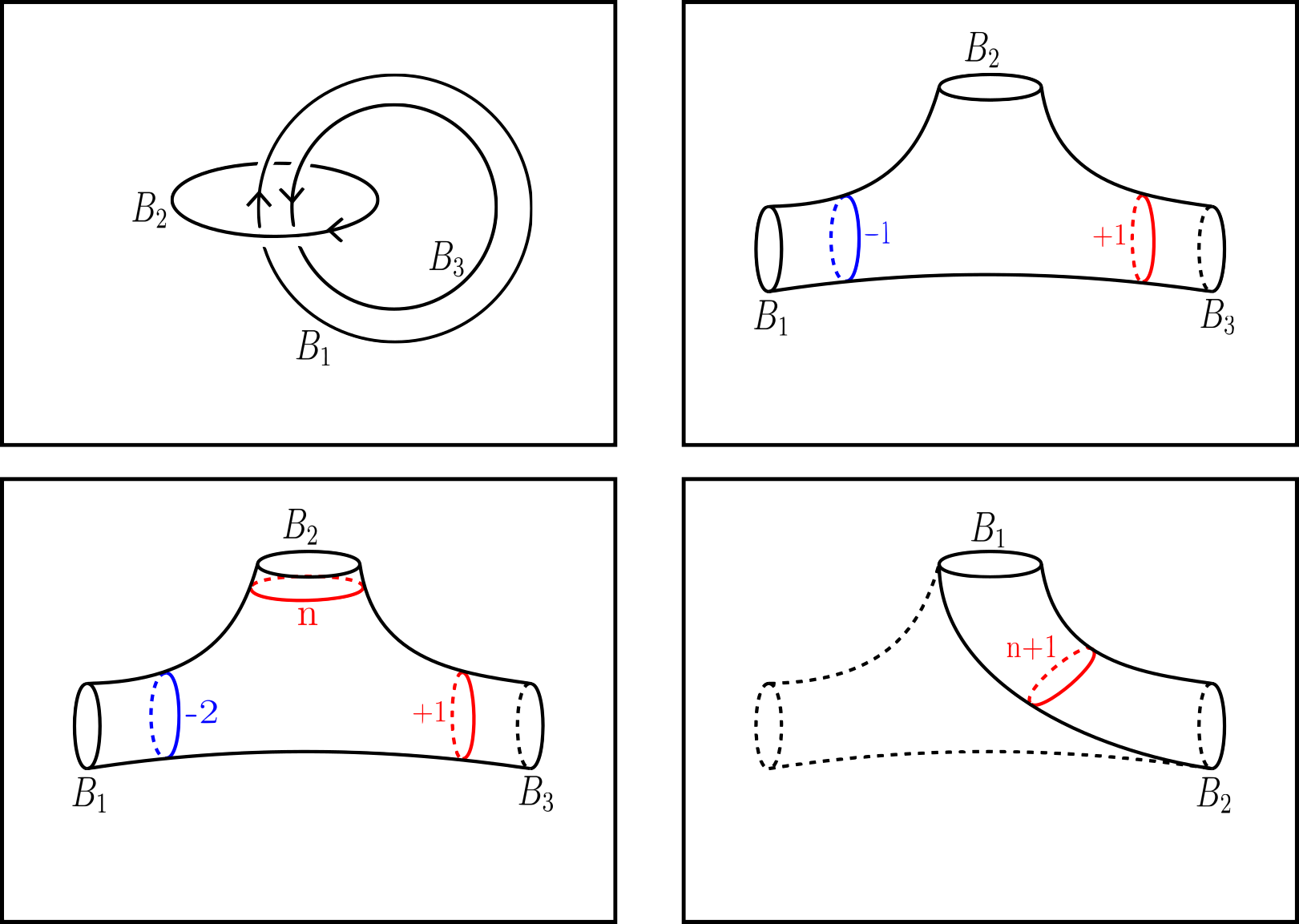}
\caption{
\textbf{Top left:} The link $L$. \textbf{Top right:} The open book $(P,D^{-1}_{B_1}\circ D_{B_3})$.\\
\textbf{Bottom left:} The open book $(P,D_{B_2}^{n}\circ D^{-2}_{B_1}\circ D_{B_3})$. \textbf{Bottom right:} The open book $(A,D^{n+1}_C)$.
}
\label{three}
\end{figure}

The results above provide a universal upper bound for the fractional Dehn twist coefficient of a capped-off open book with connected binding, given that the un-capped fractional Dehn twist coefficients are negative enough. We would like to be able to extend our results to open books with more than two boundary components, but this is difficult in general. The following example demonstrates that even for the simplest case of three boundary component, planar open books in lens spaces, we have no clear bound:

%********************************

\begin{ex}
Let $L=B_1\cup B_2\cup B_3$ be the connect sum of a negative and positive Hopf link in $S^3$, as in Figure \ref{three}. This link is fibered, with open book $(P,D^{-1}_{B_1}\circ D_{B_3})$ where $P$ is a pair of pants. The page framing on $B_2$ is the Seifert framing, so we may add arbitrary Dehn twists along $B_2$ to obtain a family of open books $(P,D_{B_2}^n\circ D^{-1}_{B_1}\circ D_{B_3})$ for $S^3$. This changes the page-framing on $B_1$ to $n-1$ with respect to the Seifert framing, so we can add another negative Dehn twist along $B_1$ to obtain an open book $(P,\phi)$ for the lens space $L(n,1)$, where $\phi=D_{B_2}^n\circ D^{-2}_{B_1}\circ D_{B_3}$. Note that $\tau_{B_1}(P,\phi)=-2<\dim\mathit{HF}^+_\red(L(n,1))-1$.

On the other hand, capping off $B_1$ now yields the open book $(A,D^{n+1}_C)$ where $C$ is a core curve of the annulus $A$. The ambient manifold for $(A,D^{n+1}_C)$ is the Lens space $L(n+1,1)$ which is a rational homology sphere, and we can see that $\tau_{B_i}(A,D^{n+1}_C)=n+1$ for $i=1,2$.
\end{ex}

%********************************

Indeed, there is a significant technical challenge to applying the same strategy outlined in the previous section: we don't have an analogue of the relationship between twist coefficients and reduced Floer homology for open books with disconnected binding. In fact, as noted by Baldwin and Etynre in \cite{be}, the open books $(S,\phi)$ of Example \ref{beex} provide an infinite family of examples with genus $g=1$ and $r=2$ boundary components where $c_\red(S,\phi)=0$, despite the fractional Dehn twist coefficients on both boundary components being arbitrarily large.

This leads us to the following question:

%********************************

\begin{que}
Is there a way to guarantee that $c_\red(S,\phi)\neq 0\in \mathit{HF^+_\red}(-Y)$ in terms of fractional Dehn twist coefficients when $S$ has $r\geq 2$ boundary components?
\end{que}

%********************************

In another direction, our results involve heavy restrictions on the fractional Dehn twist coefficient of an open book \emph{along the boundary component being capped off}. However, one may intuitively wonder about a restriction on the amount that the twist coefficient can change along a \emph{fixed boundary component}, independent of the others, in terms of the Floer homology of the underlying manifold.

%********************************

\begin{que}
Can we bound the change in twist coefficient of a fixed boundary component, independent of the twist coefficient of the boundary component which is being capped off?
\end{que}

%********************************

Finally, our results only give a universal upper bound of $1$ for the twist coefficient of the capped off open book, independent of the twist coefficients along $B_i$ for $i\neq 1$. One may intuitively hope to get a bound on the \emph{difference} between the twist coefficients, in terms of the Floer homology:

%********************************

\begin{que}
When $S$ has $r=2$ boundary components, can we bound the difference $|\tau_{B_2}(S,\phi)-\tau_B(S_0,\phi_0)|$ in terms of the Floer homology of $Y$? What about when $r>2$?
\end{que}

%********************************

\bibliographystyle{alpha}
\bibliography{references}

\end{document}